\newcommand{\bS}{\mbox{$\mathbb S$}}
\newcommand{\Pf}{{\em Proof}. }
\newcommand{\EPf}{\hfill$\square$}
\newcommand{\Z}{\mbox{$\mathbb Z$}}
\newcommand{\R}{\mbox{$\mathbb R$}}
\newcommand{\C}{\mbox{$\mathbb C$}}
\newcommand{\Q}{\mbox{$\mathbb H$}}
\newcommand{\SU}[1]{\mbox{$\mathbf{SU}(#1)$}}
\newcommand{\U}[1]{\mbox{$\mathbf{U}(#1)$}}
\newcommand{\SP}[1]{\mbox{$\mathbf{Sp}(#1)$}}
\newcommand{\SO}[1]{\mbox{$\mathbf{SO}(#1)$}}
\newcommand{\OG}[1]{\mbox{$\mathbf{O}(#1)$}}
\newcommand{\Spin}[1]{\mbox{$\mathbf{Spin}(#1)$}}
\newcommand{\G}{\mbox{$\mathbf{G}_2$}}
\newcommand{\tref}[1]{Theorem~\ref{#1}}
\newcommand{\cref}[1]{Corollary~\ref{#1}}
\newcommand{\pref}[1]{Proposition~\ref{#1}}
\newcommand{\lref}[1]{Lemma~\ref{#1}}
\newtheorem{thm}{Theorem}[section]
\newtheorem*{thm*}{Theorem}
\newtheorem*{thmmain*}{MAIN THEOREM}
\newtheorem{lem}[thm]{Lemma}
\newtheorem{cor}[thm]{Corollary}
\newtheorem{prop}[thm]{Proposition}
\newtheorem*{prop*}{Proposition}
\theoremstyle{definition}
\theoremstyle{remark}
\newtheorem{rem}{Remark}[section]
\newtheorem{ex}[rem]{Example}
\newtheorem{quest}[rem]{Question}
\title{Isometric actions on spheres\\ with an orbifold quotient}
\author{Claudio Gorodski and Alexander Lytchak}
\address{Instituto de Matem\'atica e Estat\'\i stica, Universidade de 
S\~ao Paulo, Rua do Mat\~ao, 1010, S\~ao Paulo, SP 05508-090, Brazil}
\email{gorodski@ime.usp.br}
\address{Mathematisches Institut, Universit\"at zu K\"oln, 
Weyertal 86-90, 50931 K\"oln, Germany}
\email{alytchak@math.uni-koeln.de}
\thanks{The first author was partially supported by the
CNPq grant 303038/2013-6 and the FAPESP project 2011/21362-2.}
\date{\today}
\begin{document}

\begin{abstract} 
We classify representations of  compact connected Lie
groups whose induced action on the unit sphere 
has an orbit space isometric to a Riemannian orbifold.
\end{abstract}

\maketitle

\section{Introduction}

In this paper we classify all orthogonal representations of compact 
connected   
groups $G$ on Euclidean unit spheres
$\mathbb S^n$  for which the quotient $\bS^n/G$ is a Riemannian orbifold.
We are going to call such representations \emph{infinitesimally polar}, since they 
can be equivalently defined  by
the condition that slice representations at all non-zero vectors 
are \emph{polar}.   The interest in such representations is two-fold.

On the one hand, one can hope to isolate an interesting class of representations in this way.  This class contains all \emph{polar representations}, which can be defined by the property that the corresponding quotient space  $\mathbb S ^n /G$ is an orbifold of constant curvature $1$.  Polar representations very often play an important and special role in  geometric questions concerning representations (cf.~\cite{PT2,BCO}), and the class  investigated in this paper consists of  closest relatives of polar representations.

On the other hand, any such quotient has positive curvature and all geodesics in
the quotient are closed.  Any of these two
properties is extremely interesting and in both classes the number of known manifold examples is very limited (cf.~\cite{Ziller,Be}).
 We have hoped to obtain new examples of positively curved manifolds and  of manifolds with closed geodesics as universal orbi-covering of such spaces.
Should the orbifolds be \emph{bad} one could still  hope to obtain new examples of interesting orbifolds.  For instance, the two-dimensional
weighted complex projective spaces  are very special  examples of rotationally invariant singular Zoll surfaces (\cite[chapter~4]{Be} 
and~\cite{GUW}).  And the four-dimensional
weighted quaternionic projective space $\mathbb S^7 /\SU2$, where 
$\SU2$ acts by the irreducible representation of complex
dimension~$4$, is the so called Hitchin orbifold $O_3$ 
(cf.~\cite{ziller07} and the references therein). 
This orbifold with a different positively curved metric is the 
starting point of the only  new positively curved manifold discovered 
in the last decades~\cite{Dear,GVZ}.  We have hoped that new interesting 
examples of orbifolds may arise in this way.
 
Unfortunately, this hope was not fulfilled. Our main result shows that no interesting new examples  occur.  Namely, we prove:

\begin{thm}\label{firstthm}
Let $\rho$ be an orthogonal representation of a compact Lie group $G$ on a Euclidean space $\R ^{n+1}$. Assume that
the quotient $X=\mathbb S^n /G$ is a Riemannian orbifold.   Then the universal orbi-covering $\tilde X$ of 
$X$ is either a weighted complex or a weighted  quaternionic projective space, or $\tilde X$ has constant curvature $1$ or  $4$. 
\end{thm}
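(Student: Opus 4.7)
The plan is to reduce to a finite case analysis via a classification of infinitesimally polar representations and then identify the universal orbi-cover case by case. First I would reduce to $\rho$ irreducible. If $\rho=\rho_1\oplus\rho_2$ with both summands nonzero, then $\bS^n$ is the spherical join of the unit spheres of the summands, and hence $X=\bS^n/G$ is the spherical join of the two lower-dimensional quotients. For a nontrivial spherical join to be a Riemannian orbifold, each factor must itself be a spherical orbifold, and a metric computation in the interior of the join forces both factors to have constant curvature~$1$. So in the reducible case $\rho$ is polar and $\tilde X=\bS^n$, and it suffices to treat irreducible~$\rho$.

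Next, I would invoke the classification of irreducible infinitesimally polar representations of compact connected Lie groups. Polar irreducible representations are classified by Dadok, and their quotients have constant curvature~$1$, giving $\tilde X=\bS^m$. The non-polar but infinitesimally polar irreducible representations form a short explicit list, obtained by combining the cohomogeneity-based classification of non-polar representations with a slice-polarity check at every singular non-zero orbit; this reduces the problem to finitely many explicit candidates.

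For each surviving candidate I would compute $X=\bS^n/G$ as a metric space and pass to its universal orbi-cover. The Hopf-type examples---where $G$ contains a subgroup acting by scalars on $\C^{n+1}$ or $\Q^{n+1}$---produce complex or quaternionic projective spaces with a Fubini--Study metric, which is constant curvature~$4$ in complex dimension~$1$ or quaternionic dimension~$1$ and a (possibly weighted) projective space in higher dimension; the weights encode the action of finite central subgroups of $G$ by roots of unity. All remaining candidates on the list either turn out to be polar in disguise or are covered by one of the above.

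The main obstacle will be the last step: passing from local slice polarity to a global metric identification of $\tilde X$. Concretely, one must combine positive-curvature rigidity with the Zoll (closed-geodesics) condition to exclude exotic orbi-covers, and verify in each remaining case that the singular stratification of $X$ matches that of a weighted projective space or a round sphere of curvature~$1$ or~$4$, leaving no room for a new \emph{interesting} example.
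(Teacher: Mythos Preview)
Your reduction to irreducible $\rho$ in the first paragraph is incorrect, and it is not a minor gap: the heart of the classification lives precisely in the reducible case. The spherical join identity $\bS(V_1\oplus V_2)=\bS(V_1)\ast\bS(V_2)$ is true at the level of spaces, but the quotient $\bS(V_1\oplus V_2)/G$ is the join $\bigl(\bS(V_1)/G\bigr)\ast\bigl(\bS(V_2)/G\bigr)$ only when $G$ splits as $G_1\times G_2$ with each $G_i$ acting solely on $V_i$. When $G$ acts diagonally, a point $(\cos t\,v_1,\sin t\,v_2)$ is moved in both coordinates at once, and the quotient is genuinely not a join. Concretely, $(\U n,\C^n\oplus\C^n)$ is reducible and non-polar, yet its spherical quotient is $\bS^3_+(\tfrac12)$, a hemisphere of constant curvature~$4$; indeed every representation in Table~1 of the paper is of this ``doubling'' type $V\oplus V$. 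So your conclusion that reducible implies polar is false, and with it the whole strategy collapses.

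The paper's route is essentially the opposite of yours. It first shows (via a boundary/no-boundary dichotomy and Wilking's rank argument) that the empty-boundary case forces rank~$1$ and an almost free action, yielding the weighted projective spaces. In the boundary case, Theorem~\ref{good} guarantees $X$ is a good orbifold with spherical universal cover, and then an inductive ``strata reduction'' (Corollary~\ref{induction} plus Theorem~\ref{Mainstep}) shows that for $k\ge 3$ the representation must be a sum of two cohomogeneity-one pieces; these are enumerated in Proposition~\ref{red}, and their quotients are computed directly in Section~\ref{secdiscuss} to have constant curvature~$4$. The irreducible case is not the main source of examples but rather something to be \emph{ruled out} (and this exclusion is one of the harder parts of Theorem~\ref{Mainstep}). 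Your proposed list of ``non-polar infinitesimally polar irreducible representations'' is therefore essentially empty above cohomogeneity~$3$, and you would never reach the actual examples.
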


As mentioned above, $\tilde X$ has constant curvature $1$ if and only if the action of $G$ is polar, a very well   understood situation~\cite{D}.
It turns out that $\tilde X$ is a weighted projective space if and only if it has dimension $2$ or $G$ acts almost freely and has rank $1$, again
two very well known cases~\cite{str,gromollsph}.   

Almost free actions 
and polar representations provide orbifolds of arbitrary dimension.
On the other hand, the remaining case  of quotients of constant curvature $4$ can  occur only in low cohomogeneity, maybe the most surprising consequence of our classification.

\begin{thm} \label{secondthm}
Let $X=\mathbb S^n /G$ be a Riemannian orbifold.  If $G$ has rank at least $2$ and the representation is not polar, then
the dimension $k$ of $X$ satisfies $2\leq k \leq 5$.  
\end{thm}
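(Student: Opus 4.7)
The plan is to deduce \tref{secondthm} from \tref{firstthm} together with an additional analysis in the constant-curvature-$4$ case. Under the non-polar hypothesis, \tref{firstthm} leaves three possibilities for $\tilde X$: a weighted complex projective space, a weighted quaternionic projective space, or a simply-connected Riemannian orbifold of constant curvature~$4$.

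For the two weighted projective cases I would invoke the rank dichotomy noted immediately after \tref{firstthm}: such a quotient occurs only when $\dim \tilde X = 2$ or $\mathrm{rank}(G) = 1$. Since $\mathrm{rank}(G)\geq 2$ by assumption, we obtain $k = 2$, well within the asserted range.

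In the remaining case $\tilde X$ is simply connected of constant curvature~$4$, hence isometric to a round sphere $\bS^k(1/2)$. The lower bound $k\geq 2$ is easy: cohomogeneity-zero actions are trivially polar (the section is a point) and cohomogeneity-one actions on spheres are always polar, since any normal geodesic to a principal orbit is a section; both are excluded by the non-polar hypothesis.

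The upper bound $k\leq 5$ is the heart of the matter and where I expect the main obstacle to lie. My strategy would be to apply O'Neill's formula on the regular stratum, where the quotient map $\bS^n \to \bS^k(1/2)$ is an honest Riemannian submersion with total-space curvature $1$ and base curvature $4$; this forces the $A$-tensor to have unit length on every horizontal plane, a very restrictive Hopf-type rigidity. Combined with the classical rigidity results of Escobales and Ranjan on Riemannian submersions from round spheres to space forms, this severely constrains the local structure of a principal orbit, making it look like a fibre of a classical Hopf bundle. I would then enumerate those compact connected Lie groups of rank at least two whose orthogonal representations can realize such a structure non-polarly, reading off the dimension of the base in each case and verifying $k\leq 5$. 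Ruling out all $k\geq 6$ is the technical crux; in practice one expects this to fall out of the explicit representation-theoretic classification of infinitesimally polar actions which presumably underlies the proof of \tref{firstthm} in the body of the paper.
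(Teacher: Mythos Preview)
Your reduction via \tref{firstthm}, the handling of the weighted projective cases, and the lower bound are all correct, but the curvature-$4$ case is where the content lies, and there your proposed O'Neill/Escobales--Ranjan argument has a genuine gap. Those rigidity theorems classify Riemannian submersions from the \emph{entire} round sphere onto a complete base, with connected totally geodesic fibres; here the submersion exists only over the principal stratum, the base is incomplete (it has boundary), and group orbits are generally not totally geodesic. The condition $|A_XY|=1$ on horizontal planes is suggestive but does not by itself force Hopf-like behaviour on an open piece. Indeed, the paper explicitly points out that Radeschi's non-homogeneous singular Riemannian foliations on spheres produce constant-curvature-$4$ hemisphere quotients of \emph{arbitrarily large} dimension, so the curvature hypothesis alone cannot yield $k\leq5$; the homogeneity must enter in an essential, not merely enumerative, way.

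The paper does not attempt any such submersion-rigidity shortcut. It deduces \tref{secondthm} directly from the full classification \tref{thmsum}, and the bound $k\leq5$ is obtained inside the inductive ``Main Step'' (\tref{Mainstep}): given a non-polar good orbifold quotient $X^k=\bS^n/G$ with $\partial X\neq\emptyset$, one picks an isotropy group $H$ along a codimension-$1$ stratum and passes to the quotient $\bS^m/N$ of its fixed-point sphere by the normalizer (\cref{induction}); this is again a non-polar good orbifold quotient, now of dimension $k-1$, and one inducts. The representation-theoretic input is the explicit list of non-polar infinitesimally polar sums of two cohomogeneity-$1$ representations (\pref{red}), which caps the cohomogeneity at~$6$. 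It is precisely this strata-reduction step---available for group actions but not for general singular Riemannian foliations---that explains why the homogeneous case stops at $k=5$ while Radeschi's examples do not. Your fallback (``one expects this to fall out of the classification'') is therefore exactly what the paper does; once you invoke the classification you are reproducing the paper's argument, and the O'Neill route does not furnish an independent path.
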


 This result becomes more  surprising if  one compares it with  the world 
of non-homogeneous singular Riemannian  foliations on spheres.  
Recent examples of M. Radeschi~\cite{Radeschi2} 
show that there are singular Riemannian foliations
on round spheres $\mathbb S^n$, for $n$ large enough, 
such that the quotient space is isometric 
to a  hemisphere of curvature $4$, which can be of arbitrary large 
dimension!  

On the other hand, for more standard
examples of infinitesimally polar actions 
the differences are not that big. For polar foliations, 
it is known 
that the only quotients 
that arise, arise as quotients of group representations~\cite{Th5}.
Moreover, in higher codimensions
essentially all  polar  foliations are homogeneous~\cite{Th2}.    The case of  $G$ having rank one and acting almost freely  corresponds to the case of regular  Riemannian 
foliations. All such foliations  but the $7$-dimensional Hopf fibration 
are homogeneous (\cite{LW} and the literature therein).  
We refer also to  \cite{Radeschi} for a related result.

As it turns out, there are only very few representations satisfying 
the assumptions of \tref{secondthm} for $k\geq 3$, the case $k=2$  having been previously classified in \cite{str}.
 At least if we restrict the 
representation to the connected component,
all such representations are of a similar type: they are,
up to a circle factor,
the double of a standard vector representation. 
In the Table~1 and further in the text we will use 
the following notation:
by  $\mathbb S^k (r)$,  $\mathbb S^k _+ (r)$, 
$\mathbb S^k  _{++}  (r)$, $\mathbb S^k  _{+++} (r)$ we denote the round sphere of constant curvature $\frac {1} {r^2}$ quotiented by the group $\Gamma$ which is 
respectively  generated
by $0$, $1$, $2$, $3$ commuting reflections. We call the corresponding spaces the
sphere, hemisphere, quarter-sphere and eighth-sphere of curvature $\frac 1 {r^2}$, respectively.

\begin{thm} \label{thirdthm}
Let $\rho: G \to \SO {n+1}$ be an orthogonal representation of a 
compact connected Lie group $G$
such that the quotient space $X=\mathbb S ^n /G$ is a Riemannian orbifold of dimension $k$.   Assume that $\rho$ is non-polar, $G$ has rank at least $2$ and $k\geq 3$.
Then $\rho$ is one of the following:
 \setlength{\extrarowheight}{0.1cm}
\[ \begin{array}{|c|c|c|c|}
\hline
G & \rho & \textsl{Conditions} & \textsl{Orbit space}\\
\hline
\Spin9 & \R^{16}\oplus\R^{16} & - & \bS^3_{++}(\frac12)\\
\SU n & \C^n\oplus\C^n & n\geq3&  \bS^3_+(\frac12)\\
\U n & \C^n\oplus\C^n & n\geq2& \bS^3_+(\frac12)\\
\SP n & \Q^n\oplus\Q^n & n\geq2 & \bS^5_+(\frac12)\\
\SP n\U1 & \C^{2n}\oplus\C^{2n} & n\geq2 & \bS^4_{++}(\frac12)\\
\SP n\SP1 & \R^{4n}\oplus\R^{4n} & n\geq2 & \bS^3_{++}(\frac12)\\
\mathbf T^2\times \SP n & \C^{2n} \oplus \C^{2n} & n\geq2& \bS^3_{+++}(\frac12)\\
\hline
\end{array} \]
\begin{center}
\textsc{Table 1: Quotients of constant curvature~$4$}
\end{center}
\end{thm}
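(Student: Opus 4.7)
\emph{Proof plan.}\ By \tref{firstthm}, since $\rho$ is non-polar, $\tilde X$ either has constant curvature $4$ or is a weighted complex or quaternionic projective space.  The latter forces either $\dim\tilde X=2$ or $G$ of rank~$1$ acting almost freely, both excluded by the hypotheses.  Hence $\tilde X$ is a simply connected Riemannian orbifold of constant curvature $4$ and dimension $k$, so $\tilde X\cong\bS^k(\tfrac12)$; and by \tref{secondthm} we have $k\in\{3,4,5\}$.  The quotient $X=\tilde X/\Gamma$ is itself a constant-curvature orbifold, so the finite deck group $\Gamma$ of isometries of $\bS^k(\tfrac12)$ must be generated by pairwise commuting reflections, yielding $X\in\{\bS^k,\bS^k_+,\bS^k_{++},\bS^k_{+++}\}$ of curvature $4$.

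The next step is to identify $\rho$ itself.  All representations in Table~1 share a common structural pattern: modulo a central $\mathbf T^1$, $\mathbf T^2$, or $\SP1$ factor acting by unit scalars, $\rho$ is the diagonal sum $V\oplus V$, with $V$ an irreducible representation on whose unit sphere the underlying subgroup of $G$ acts transitively.  Transitive actions on spheres are classified by Borel--Montgomery--Samelson; restricting to those giving rank $\geq2$ and cohomogeneity $\geq3$ on the double $V\oplus V$, while avoiding polarity, leaves exactly the building blocks in Table~1: $\SU n$, $\U n$ on $\C^n$; $\SP n$ (possibly extended by $\U1$, $\SP1$, or $\mathbf T^2$) on $\Q^n$; and $\Spin 9$ on $\R^{16}$.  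The remaining transitive-on-sphere families ($\SO n$ on $\R^n$, $\Spin 7$ on $\R^8$, $\G$ on $\R^7$) either produce polar quotients or have cohomogeneity $\leq 2$ on the doubled representation.  I would derive this classification either directly from the infinitesimally polar classification underlying \tref{firstthm}, or by analyzing the restriction of $\rho$ to each irreducible summand and exploiting that slice representations at every stratum must themselves be polar.

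Finally, one must verify that each surviving candidate indeed produces the claimed orbit space.  For a transitive-on-sphere group $G$ acting diagonally on $V\oplus V$, orbits are parameterized by the $G$-invariant $2\times 2$ Hermitian matrix of pairings of the two components (real, complex, or quaternionic according to the type of $V$), positive semidefinite with trace~$1$, describing a $k$-dimensional convex region.  A direct computation of the induced Riemannian metric on this region --- most efficiently carried out via the O'Neill formula for the Riemannian submersion on the regular stratum, which translates the curvature~$4$ condition into $|A(X,Y)|^2=1$ for every orthonormal horizontal pair --- identifies it with the stated spherical quotient; central factors then act on the pairing matrix by additional reflections, accounting for the $+$, $++$, and $+++$ variants.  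The most delicate verification is the $\Spin 9$ case, where the octonionic structure of the spin representation replaces the Hermitian pairing of the classical cases; this explicit curvature computation, and the parallel bookkeeping of how the central toral/$\SP 1$ factors enlarge $\Gamma$, is the main technical obstacle I anticipate.
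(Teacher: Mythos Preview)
Your proposal has a fundamental circularity problem: you invoke \tref{firstthm} and \tref{secondthm} to prove \tref{thirdthm}, but in the paper's logical structure both of those theorems are deduced \emph{from} \tref{thirdthm} (via \tref{thmsum}) in Section~\ref{secverylast}. The actual proof of \tref{thirdthm} rests instead on the independently established \tref{good} and \tref{Mainstep}: one first shows $X$ has nonempty boundary (rank $\geq 2$ rules out almost free actions via \lref{posrank}), then \tref{good} gives that $X$ is a good orbifold with spherical universal cover, then \tref{Mainstep} forces $\rho$ to be a sum of two cohomogeneity-one representations, and finally \pref{red} together with the explicit orbit-space computations of Section~\ref{secdiscuss} pin down the list in Table~1.

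Even setting the circularity aside, two of your steps are genuine gaps rather than routine verifications. First, the assertion that the deck group $\Gamma$ of $\tilde X\to X$ must be generated by \emph{pairwise commuting} reflections is unjustified: a priori $\Gamma$ could be any finite Coxeter group acting on $\bS^k(\tfrac12)$, and the fact that only $(\Z_2)^r$ appears is a \emph{consequence} of the classification, not an input to it. Second, and more seriously, you never prove that $\rho$ is reducible, let alone that it splits as $V\oplus V$ with $G$ transitive on the unit sphere of $V$. This is exactly the content of \tref{Mainstep}, whose proof is the technical heart of the paper: it proceeds by induction on $k$ using the strata-reduction of \cref{induction}, a delicate argument about closed geodesics and antipodal sets on $\tilde X$ to force reducibility, and a case-by-case elimination of irreducible candidates using minimal reductions and \lref{slice}. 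Your sketch (``analyzing the restriction of $\rho$ to each irreducible summand'') presupposes the reducibility it needs to establish.
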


We can now summarize our classification:
\begin{thm} \label{thmsum}
Let a compact Lie group $G$ act effectively and isometrically on the  unit
sphere 
$\mathbb S^n$.  Let $\rho: G^0 \to\SO {n+1}$ be the corresponding 
representation of the identity component.
The quotient $\mathbb S^n /G$ is a Riemannian orbifold if and only if one of 
the following cases occur for $\rho$:
\begin{itemize}
\item The representation $\rho$ is polar;
\item $G^0 =S^1$ and $\rho$  is a sum of non-trivial 
irreducible representations;
\item $G^0 =\SU2$ and  $\rho$ is a sum of non-trivial  irreducible 
quaternionic representations;
\item $\rho$ has cohomogeneity $3$;
\item $\rho$ is described in \tref{thirdthm}.
\end{itemize}
\end{thm}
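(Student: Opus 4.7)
The plan is to reduce to the earlier theorems of the paper. First I would replace $G$ by its identity component: since $G/G^0$ is finite and acts isometrically on $\mathbb S^n/G^0$, the quotient $\mathbb S^n/G = (\mathbb S^n/G^0)/(G/G^0)$ is a Riemannian orbifold if and only if $\mathbb S^n/G^0$ is, and each of the five listed conditions involves only $\rho$; so we may assume $G$ is connected.

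For the direction asserting that an orbifold quotient forces one of the listed conditions, I would apply \tref{firstthm} to split the universal orbi-cover $\tilde X$ into three cases. Constant curvature $1$ is equivalent to polarity of $\rho$ by the remark right after \tref{firstthm}, giving the first bullet. The weighted complex/quaternionic projective case splits further, by the same remark, into $\dim X = 2$ (the cohomogeneity-$3$ bullet) and $G$ of rank one acting locally freely. A connected rank-one compact group is $S^1$ or $\SU 2$. For $S^1$, local freeness is exactly the absence of a trivial summand, matching bullet two. For $\SU 2$, the only connected positive-dimensional proper subgroups are the maximal tori $T$, so local freeness amounts to $T$ having no nonzero fixed vector; since the weights of the irreducible $V_n$ of $\SU 2$ are $\{n,n-2,\dots,-n\}$, the weight $0$ occurs iff $n$ is even, iff $V_n$ is of real type, so local freeness is equivalent to every irreducible summand being of quaternionic type, matching bullet three. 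Finally, if $\tilde X$ has constant curvature $4$, then either $\dim X\leq 2$ (cohomogeneity $\leq 3$), or $G$ has rank one and $\tilde X$ is simultaneously a weighted projective space (necessarily $\C P^1$ or $\Q P^1$ of constant curvature $4$), already captured above, or $G$ has rank $\geq 2$ and $\dim X\geq 3$, in which case \tref{thirdthm} identifies $\rho$ as an entry of Table~$1$.

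For the converse, each listed condition produces an orbifold quotient: polarity gives a spherical orbifold by definition; the $S^1$ and $\SU 2$ cases yield locally free actions (by the isotropy analysis above) whose quotients are weighted projective orbifolds; a cohomogeneity-$3$ representation gives a two-dimensional orbit space which is automatically a spherical $2$-orbifold (the slice representation at any point has cohomogeneity at most $3$, and its slice-sphere quotient is a $2$-dimensional spherical orbifold by direct inspection); and the entries of Table~$1$ have orbifold quotients of constant curvature $4$ by \tref{thirdthm} itself. The main obstacle has been absorbed into Theorems~\ref{firstthm}--\ref{thirdthm}, and the only nontrivial step remaining in this final assembly is the local-freeness criterion for $\SU 2$-representations recorded above.
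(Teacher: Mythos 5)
Your plan is circular relative to the paper's logical structure: in Section~\ref{secverylast} the paper derives \tref{firstthm} \emph{from} \tref{thmsum} (and the preceding results), so invoking \tref{firstthm} as the starting point for \tref{thmsum} is not available without supplying an independent proof of \tref{firstthm}. The paper's actual route to \tref{thmsum} is bottom-up: after reducing to the boundary case via \lref{posrank} and \tref{good}, one applies the inductive \tref{Mainstep} to conclude that $\rho$ is a sum of two cohomogeneity-one representations, then reads off the list from \pref{red} and Section~\ref{secdiscuss}. No appeal to \tref{firstthm} is made (or possible) at that stage.

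Even granting \tref{firstthm}, your case analysis of the constant-curvature-$4$ branch has a gap. You split into ``$\dim X\leq 2$'', ``$G$ of rank one and $\tilde X$ a weighted projective space'', and ``$G$ of rank~$\geq 2$ with $\dim X\geq 3$'', treating \tref{thirdthm} as covering the last case. But you never rule out the remaining configuration: $G$ of rank one, $\dim X\geq 3$, $\rho$ non-polar, and the action \emph{not} almost free. In that situation $X$ has boundary (e.g.\ by \lref{posrank}), $\tilde X$ need not a priori be a weighted projective space, and \tref{thirdthm} does not apply because it assumes $\mathrm{rank}\,G\geq 2$. The statement of \tref{firstthm} alone does not exclude this configuration; what excludes it in the paper is precisely the combination of \tref{good}, \tref{Mainstep} and \pref{red}: the only rank-one entry surviving \pref{red} (case~3, $(\SU 2,\mu_2\oplus\mu_2)$) is almost free, so its quotient has empty boundary and the boundary case is vacuous in rank one. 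You would need to re-import that chain of results to close your argument, at which point you have essentially reproduced the paper's proof and the detour through \tref{firstthm} is unnecessary.

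The rest of your reasoning is sound and matches the paper: the reduction to $G=G^0$, the local-freeness criterion for $S^1$ and $\SU 2$ (real type $\Leftrightarrow$ nonzero fixed vector of a maximal torus $\Leftrightarrow$ zero weight), the observation that cohomogeneity-$3$ representations are automatically infinitesimally polar because their slice representations on the unit normal spheres have cohomogeneity $2$ (you wrote ``at most $3$'', a harmless slip), and the appeal to Section~\ref{secdiscuss} for the converse.
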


All good quotient orbifolds appearing in \tref{thmsum} have curvatures 
between~$1$ and~$4$
and all bad quotient orbifolds appearing in \tref{thmsum} have diameter~$\pi /2$, 
hence curvature~$\leq 4$ at some points.    
We are not aware of a geometric  reason, why the curvature cannot be constant 
equal to $9$, or just larger than $4$ at every tangent plane.
Due to \cite{LT}, for  any representation $\rho:G \to \SO {n+1}$ the quotient  
$X=\mathbb S^n /G$ always has regular points 
with arbitrary large curvatures at some plane, 
unless $X$ is a Riemannian orbifold, hence
described above.  On the other hand, it seems to be an interesting question, 
how large the infimum of the curvatures can be in a general  
quotient space $X=\mathbb S^n /G$.  
We hope to discuss this question closely related to \cite{Gowan,Greenwald} in a forthcoming paper.


Many parts of our proof apply to non-homogeneous  singular Riemannian foliations. Some observations  may be of independent
interest, for instance the  following:

\begin{thm} \label{good}  
Let $X$ be a compact positively curved Riemannian orbifold of 
dimension at least $3$. If $X$ has non-empty boundary as an Alexandrov space, then $X$  is a good orbifold.  Moreover,  the universal orbi-covering  
of $X$ is diffeomorphic to a sphere.
\end{thm}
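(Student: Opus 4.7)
\textbf{Proof proposal for Theorem \ref{good}.}

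The plan is to construct the universal orbi-cover by doubling $X$ along its mirror boundary, verify that the resulting space is a closed simply-connected Riemannian manifold with a reflection symmetry, and then recognize it as a sphere via a positive-curvature rigidity argument.

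First, I would invoke the standard fact that for a Riemannian orbifold the Alexandrov boundary coincides with the closure of the mirror locus, i.e., with the set of points whose local group contains a reflection. I would then form the Alexandrov double $Y = X\cup_{\partial X} X$, which by the gluing theorem of Petrunin is a compact positively curved Alexandrov space. Since the gluing takes place along a totally geodesic stratum of a Riemannian orbifold, $Y$ inherits a natural Riemannian orbifold structure whose local groups at interior points of the former boundary are obtained from those of $X$ by removing one reflection generator. The involution swapping the two copies exhibits $X$ as $Y/(\mathbb{Z}/2)$, so $X$ and $Y$ share the same universal orbi-cover $\tilde X$.

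Second, I would show that $\tilde X$ is in fact a manifold, by proving that the natural map $\Gamma_x\to\pi_1^{\mathrm{orb}}(X)$ from each local isotropy group is injective. The crucial input is that the mirror hypersurfaces in $X$ are two-sided (they form the Alexandrov boundary), so each local reflection is represented by a short transversal arc across the mirror which cannot be orbi-null-homotopic. For points on a corner stratum, the local Coxeter-chamber structure expresses the local group as a product of transversal reflections, and injectivity follows by applying the previous case to each generator. This shows that $\tilde X$ is a closed simply-connected Riemannian manifold with positive sectional curvature, with closedness coming from Bonnet--Myers applied to the lifted metric. Lifting any local boundary reflection of $X$ produces an isometric reflection $\sigma$ of $\tilde X$ with codimension-one totally geodesic fixed set $\mathrm{Fix}(\sigma)$.

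Finally, I would apply a sphere recognition theorem: a closed simply-connected Riemannian manifold of dimension $\geq 3$ with positive sectional curvature admitting an isometric reflection is diffeomorphic to the standard sphere. The key step is that $\tilde X/\sigma$ is a positively curved Alexandrov space with boundary $\mathrm{Fix}(\sigma)$, whose diameter is at most $\pi/2$ by the maximal diameter theorem for positively curved Alexandrov spaces with boundary. Combining this with Frankel's intersection theorem for totally geodesic submanifolds in positive curvature, and with Perelman's stability theorem, one identifies $\tilde X/\sigma$ with a topological disc and hence $\tilde X$ with a topological sphere; the smooth upgrade uses that $\tilde X$ is smooth throughout and $\sigma$ controls the normal structure of the fixed hypersurface. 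The main obstacle is precisely this final sphere-recognition step, in particular promoting a homeomorphism to a diffeomorphism, which is delicate in high dimensions and appears to require the full strength of positive curvature together with the reflection symmetry.
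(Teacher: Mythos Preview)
Your argument has a genuine gap in the second step, where you claim injectivity of $\Gamma_x\to\pi_1^{\mathrm{orb}}(X)$. Showing that each local reflection maps to a nontrivial element does \emph{not} establish injectivity of the whole homomorphism: at a corner where two mirrors meet with dihedral angle $\pi/n$, the local group is the dihedral group $D_n$, and the issue is whether the rotation $s_1s_2$ has order exactly $n$ in $\pi_1^{\mathrm{orb}}(X)$. Your sentence ``injectivity follows by applying the previous case to each generator'' does not address this. More fundamentally, you never prove that $X$ is a Coxeter orbifold, i.e., that every singular point lies in $\partial X$; a priori there could be isolated rotational singularities in the interior, and your doubling construction does nothing to eliminate them (indeed, doubling a sector of angle $\pi/n$ yields a cone of angle $2\pi/n$, still singular). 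Finally, your argument never invokes the hypothesis $\dim X\geq 3$, yet the theorem is false in dimension $2$ (half of a tear-drop is a bad, positively curved orbifold with boundary), so something essential is missing.

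The paper proceeds quite differently. It first passes to $\tilde X$, takes a \emph{single} reflection $g$ in the orbifold fundamental group, and studies $\tilde X/\{1,g\}$. Using that the distance to the boundary is strictly concave in positive curvature (a soul-type argument), it shows the interior of this quotient has a unique critical point, hence is contractible as an orbifold, hence is a manifold; this is what forces $X$ to be a Coxeter orbifold (via Lemma~\ref{cox}). Goodness then follows from a Frankel--Petrunin argument: in dimension $\geq 3$ any two closures of codimension-one strata of a face must intersect (condition (C3)), which implies the Coxeter presentation conditions (C1)--(C2) of Lemma~\ref{goodness}. This is precisely where $\dim X\geq 3$ enters. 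The final sphere recognition is then outsourced to Lemma~\ref{fgt-lemma}, so your worry about upgrading homeomorphism to diffeomorphism is handled by citing \cite{fgt}.
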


In dimension $2$ the above theorem is not true:
a two-dimensional disc with a singularity at the boundary 
(half of a tear-drop orbifold)
is a bad  orbifold  and can be equipped with a 
positively curved metric. Indeed, this
orbifold arises as the orbit space of
certain isometric group actions on unit spheres (compare Example \ref{example}).
 In view of this result, \cite[Theorem~2.10]{fgt} and~\cite{fg},
it seems reasonable to ask the following:

\begin{quest}
Do all orbifolds  from \tref{good} carry a metric of constant curvature?
\end{quest}
  
Another observation 
that may be of independent interest in other contexts  
is a reduction procedure similar to the reduction of the principal isotropy 
group \cite{lr,str,grove-searle,GL}.  Namely, we show in 
Section~\ref{secorbiquot} that one can view any stratum~$S$ in a given quotient 
space $M/G$ as an open subset of a (usually non-injective)
isometric immersion of 
another quotient space $F/N$, where $F$ is a closed and totally geodesic 
submanifold of $M$ and $N$
a subquotient group of $G$. In the particular case in which $S$ is a closed stratum, 
this amounts to the easy statement that the whole manifold $S$ is isometric to some 
quotient $F/N$ as above. In our opinion, it is exactly this point,
that does not have a similar for general singular Riemannian  foliations,
which is responsible for the  great variety of examples  in~\cite{Radeschi2}.  
For instance 
(cf.~\cite[Subsection 6.2]{Radeschi2}), 
assume for the moment that our quotient $\mathbb S^n /G$ is a hemi-sphere of 
constant curvature greater than~$1$. 
Then the boundary sphere $S$ is a closed stratum of our quotient and thus,
by the observation above,  the  manifold $S$ is itself isometric to the quotient of 
another sphere,  $S=\mathbb S^l /N$.   
But then $N$ must have rank one and $S$ must be $\C P ^1$ or $\mathbb H P ^1$ hence 
of dimension $2$ or $4$.  
If a similar reduction procedure existed for general singular Riemannian foliations, 
we would get a fibration $\mathbb S^l \to S$, and the dimension of $S$ would be $2$,
$4$ or~$8$. However, according to~\cite{Radeschi2}, 
the dimension of $S$ can be an arbitrary number.

This work has the following structure.   
In Section \ref{secprel}  we collect basics about 
general quotient spaces and Riemannian orbifolds.   In the last part of the section we explain that
the usual reduction of the principal isotropy group  may be used to reduce the 
classification of possible quotient orbifolds $\mathbb S^n/G$ to the case in which all isotropy groups of $G$ are products of groups of rank one, with very special slice representations (\lref{slice}).  It would have been possible to use this information together with 
some algebraic tools (cf.~\cite{S,Howe})
to prove \tref{thmsum} essentially 
by means of representation theory only, however in a rather elaborate way.
We  choose a different,  more geometric path. In Section  \ref{secdiscuss} we discuss
the geometry of all  the  quotient spaces appearing in \tref{thmsum}, in particular,   verifying the last column of  Table~1.    In Section \ref{secorbifold} we first discuss some general facts
about Coxeter orbifolds, and describe in \lref{goodness} under which conditions they are good orbifolds.  Then we use  convexity arguments   from the Soul Theorem and the theorem of Fraenkel-Petrunin to prove \tref{good}.  Already at this point we know that topologically
no interesting new manifolds can occur in our classification.  
In Section \ref{secorbiquot}, we use the reduction procedure described above
to  begin 
with the investigation of the structure of possible quotient spaces $\mathbb S^n /G$ isometric to a $k$-dimensional Riemannian orbifold $X$, with $k\geq 3$.  Under the assumption that $X$ has non-empty  boundary, i.e., that $X$ is not a weighted projective space, we apply the reduction
 and find another spherical quotient $\mathbb S^m/N$, which is a Riemannian orbifold $Y$ of dimension $(k-1)$.  Moreover, the action of $N$ is non-polar if the action of $G$ is non-polar.   In the last section we use this as the main inductive step in our argument.  Before embarking on the proof of  the main result, we  discuss in Section  \ref{special-red} a special case. In \pref{red} we discuss which sums of two  representations of cohomogeneity $1$ satisfy the assumption of \tref{thmsum}, obtaining exactly the representations listed in Table 1. 
In Section~\ref{seclast} 
we  prove by induction on the dimension $k$ of the quotient $\mathbb S^n/G$ a
 central result, which is the last piece of information that we need 
to finally deduce in Section~\ref{secverylast}
the remaining claims. 
Note, that by the general strata-reduction argument from Section \ref {secorbiquot} it suffices to  prove the results for $k\leq 6$. In principle, one could apply brutal force and write down all representations  of cohomogeneity $\leq 7$. Our proof is more geometric:
we use the inductive structure  to show that the representation can be reducible only as a sum of two representations of cohomogeneity $1$, which finishes the reducible case, due to the Section~\ref{special-red}.   Finally, we use again the inductive structure to reduce the number of possible irreducible  candidates to very few ones, which are then excluded case-by-case.

\emph{For the sake of convenience,    we generally assume that our 
representations are almost effective.}

\medskip

\noindent{\emph{Acknowledgment.}} The second named author would like to thank Gudlaugur Thorbergsson for very constructive discussions at the time
of the work on~\cite{LT}, during which 
the idea for the present paper was born.


\section{Preliminaries}  \label{secprel}
\subsection{Strata of isometric group actions}\label{Stratasubsec}
Let $G$ be a closed group of isometries of a complete connected 
Riemannian manifold 
$M$.   Let $X$ denote the quotient metric space $M/G$.  
The manifold $M$ is stratified 
by $G$-invariant submanifolds consisting of points whose isotropy 
groups are conjugate to each other.  The projection of any such stratum 
to~$X$ is called an \emph{isotropy stratum} of~$X$.    
Any isotropy stratum is an (often non-complete and disconnected)
 Riemannian manifold, which is a locally convex subset
of the Alexandrov space $X$. Any closure of any isotropy stratum
 is an \emph{extremal subset} of the 
Alexandrov space $X$ (cf. \cite[section~4.1]{Petruninsemi}).
From the geometric point of view,  the more natural stratification 
of~$X$ consists of the \emph{connected components}
of isotropy strata defined above.
These components (which also coincide with the components of \emph{normal isotropy strata}~\cite[Introduction]{S})  can also  be defined as the connected components of the set of points  in $X$ with isometric tangent  cones.
 Hence, they are determined uniquely by the metric space  $X$. In particular, they  do not depend on the  presentation of the metric space $X$ as a quotient $X=M/G$. 
We will call them the \emph{metric strata} or, simply, the \emph{strata}
of $X$.  Referring to the general structure of Alexandrov spaces, these closures are exactly the \emph{primitive} extremal subsets of $X$, and thus correspond to  the canonical metric stratification of the Alexandrov space $X$~\cite[Section~4.1(8)]{Petruninsemi}.

There is a unique full-dimensional isotropy stratum of $X$, 
corresponding to the minimal isotropy group, called 
the \emph{principal isotropy group}.
This  stratum  is always connected, convex, open and dense in $X$. It is called the \emph{principal stratum} and consists
of all \emph{principal orbits} of the $G$-action. 

We would like to mention, that the structure described above  answers Question~4.6(2) in~\cite{AKLM2}.





\subsection{Polar representations}
We refer to \cite{PT2,Th5} for accounts on polar representations.  
Recall that
a representation  $\rho:G \to \mathbf O(n+1)$  is polar if and only the 
restriction of the representation to the identity component is 
orbit-equivalent to  
the isotropy representation of a symmetric space. This happens if and only 
if the quotient space $\R ^{n+1}/G$ is a flat Riemannian orbifold.
An equivalent  property  is  that the quotient 
$\mathbb S^n /G$ of the corresponding unit sphere  is a Riemannian 
orbifold of constant curvature $1$.  

\subsection{Riemannian orbifolds}  

We assume   
some experience with orbifolds.  
We refer the reader 
to~\cite{Davis} and~\cite{KleinerLott} for the basic background.  
Some information can also be found in \cite[section~3]{GL} and much more information in  \cite{BrH}, pp.~584--618.

A Riemannian  orbifold comes along with a canonical stratification. Each stratum is 
a connected  Riemannian manifold, which is locally convex
with respect to the ambient metric. The closure of any stratum is a union 
of strata. Any Riemannian orbifold  $X$ can be 
written as a quotient of a Riemannian manifold (the orthonormal
frame bundle of~$X$)  under an almost free isometric action of a compact 
Lie group. The canonical stratification of $X$  is then exactly the 
stratification described in Subsection~\ref{Stratasubsec}.
The \emph{boundary} of the orbifold (in the sense of Alexandrov geometry)
is the closure of the  union of strata of codimension $1$.   

Any Riemannian orbifold $X$ has a unique universal orbi-covering $\tilde X$ with a 
discrete isometric action of $\Gamma = \pi_1 ^{orb} (X)$ on $\tilde X$. 
The orbifold $X$ has a non-empty boundary 
if and only if $\Gamma$ contains a \emph{reflection}, i.e., an involution with 
the set of fixed points of codimension~$1$ in~$\tilde X$.

\subsection{Orbifolds as quotients}
We recall from \cite{LT} that for an isometric action of a compact Lie group 
$G$ on a Riemannian manifold $M$, the quotient $X=M/G$ 
is a Riemannian orbifold if and only if all slice representations of the 
action are polar. 
 In \cite{LT} an isometric action of $G$ on $M$  with this property  has 
been called \emph{infinitesimally polar}.    Thus an orthogonal representation is infinitesimally polar in the sense of this paper if and only if the corresponding action on the unit sphere is infinitesimally polar 
in the sense of \cite{LT}.

An isometric action of a compact Lie group $G$ on a Riemannian manifold  
$M$ is infinitesimally polar if and only it  is the case for the 
restriction of the  action to  the 
identity component $G^0$ of $G$.  In such a case the 
canonical projection
$M/G^0 \to M/G$ is an orbi-covering.

Assuming again that  the quotient
$X=M/G$ is a Riemannian orbifold, all singular orbits of the action are 
contained in   boundary of $X$.  Moreover, if $\pi_1  (M)=1$ and $G$ is 
connected then the 
boundary of $X$ is exactly the set
of singular $G$-orbits, while its complement is the union of principal and 
exceptional orbits~\cite{Lyt}. 
In particular, $X$ has no boundary if and only the action of $G$  has no singular orbits. 

Assume now that $M$ has positive curvature.  If the principal isotropy group of the action of $G$ on $M$ is non-trivial then  the quotient $X=M/G$ has non-empty boundary (\cite{wilking-annals}, Lemma 5 and subsequent lines). Hence, if $X$ is a Riemannian orbifold and has empty  boundary, the action of $G$ on $M$ must be almost free. 

 Recall now that any action of a $k$-dimensional torus on a compact positively 
curved manifold
always has at least one orbit of dimension at most $1$, thus with an isotropy group of rank at least $(k-1)$~\cite[Lemma~6.1]{wilking-acta}. Hence:

\begin{lem} \label{posrank}
Let a compact Lie group $G$ act by 
isometries on a compact positively curved Riemannian manifold~$M$. 
If the quotient space $M/G$ has empty boundary and it is isometric
to a Riemannian orbifold of positive dimension   
then $G$ has rank $1$ and the action is almost free.
\end{lem}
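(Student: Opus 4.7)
The plan is to read the lemma off the two preliminary observations recalled in the paragraphs just before its statement: Wilking's result that, in positive curvature, a positive-dimensional principal isotropy group forces $M/G$ to have non-empty boundary; and the torus lemma guaranteeing, for any isometric $T^k$-action on a compact positively curved manifold, an orbit with isotropy of rank at least $k-1$. Combined with the general fact (also recalled above) that in a Riemannian orbifold quotient all singular $G$-orbits are contained in $\partial(M/G)$, both conclusions of the lemma will drop out by contraposition.

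First I would show that the action is almost free. Assume $X=M/G$ is a Riemannian orbifold of positive dimension with $\partial X = \emptyset$. Since $M$ is positively curved, Wilking's result immediately forces the principal isotropy $H$ to be zero-dimensional, hence a finite group. Moreover, emptiness of $\partial X$ together with the inclusion of singular orbits into $\partial X$ rules out any singular orbit at all. Consequently every $G$-orbit has the same dimension as a principal orbit, so every isotropy subgroup $G_p$ has the dimension of $H$, namely zero. Thus each $G_p$ is finite, which is precisely the almost freeness.

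For the rank bound, let $r = \mathrm{rank}(G)$ and fix a maximal torus $T^r \leq G$. Applying the torus lemma to the $T^r$-action on the compact positively curved manifold $M$ produces a point $p \in M$ whose $T^r$-stabilizer $T^r \cap G_p$ has rank at least $r-1$. By the previous step $G_p$ is finite, so this stabilizer is finite and has rank $0$. Hence $r-1 \leq 0$, that is $r \leq 1$; under the implicit understanding that $G$ is positive-dimensional one gets $r=1$, as claimed.

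I do not expect any serious obstacle: essentially all the work is done by the two cited results of Wilking and by the preliminary material on orbifold quotients, and what remains is a short chain of contrapositives. The only fine point to check is that the containment of singular orbits in $\partial X$ is invoked at the right level of generality, namely for an arbitrary compact Lie group $G$ acting isometrically on a positively curved $M$, which is exactly how the statement is formulated in the preliminaries.
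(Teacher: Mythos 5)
Your proof is correct and matches the paper's own argument, which is left implicit: the paper recalls exactly the two facts you use (Wilking's boundary criterion for the principal isotropy group and the torus-orbit lemma from positively curved symmetry theory) in the paragraphs immediately preceding the lemma and then writes ``Hence:''. Your write-up simply spells out that contrapositive chain, including the honest caveat that $G$ must be positive-dimensional to upgrade rank $\leq 1$ to rank $=1$.
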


\subsection{Quotient-geodesics}\label{quot-geod} 
Let  $M$ be  a connected complete Riemannian manifold on which a compact 
Lie group~$G$ acts by isometries with quotient space  $X=M/G$. 
A geodesic in $M$ is called $G$-\emph{horizontal}, if it is orthogonal 
to the $G$-orbits it meets.
A geodesic is horizontal if and only if it starts in a horizontal direction. 
The projection to $X$ of a horizontal geodesic is a concatenation of 
metric
geodesics (i.e.~locally distance minimizing curves) in $X$.  
We call such a projection a \emph{quotient-geodesic}.  
We refer to \cite[Section~4]{LT} for more on this subject
and for proofs of further statements below.

The projections of two $G$-horizontal geodesics coincide  
if they coincide initially. Hence, we can uniquely  extend an 
initial piece of a quotient-geodesic to a quotient-geodesic defined on $\R$. 
Since any metric geodesic in $X$
is a quotient-geodesic in the above sense, we thus can uniquely extend 
any geodesic to an infinite  quotient-geodesic. 
The set of quotient-geodesics is closed under point-wise convergence.

\begin{rem}  In fact, any quotient-geodesic 
is a so-called \emph{quasi-geodesic} of the Alexandrov space $X$ 
(\cite[Section~5]{Petruninsemi}), a notion which will not play a role below.
Unlike general   quasi-geodesics, the quotient geodesics are uniquely defined 
by their initial segments.  We have a well defined continuous 
quotient-geodesic flow on $X$. 
\end{rem}

There is a dense subset in the space of quotient-geodesics that 
run only through
the principal stratum and strata of codimension $1$.    
This union $X_1$ of all strata
of codimension at most $1$ is a Riemannian orbifold. 
Moreover, the quotient-geodesics
contained in $X_1$  coincide  
with the orbifold-geodesics in the Riemannian orbifold $X_1$.  
By the above  density claim,  the quotient-geodesics
in $X$ are limits of orbifold-geodesics in $X_1$, hence they are 
defined only in metric terms, independently of the presentation of 
the metric space $X$ as a quotient $X=M/G$. 
In particular, if $X$ is a Riemannian orbifold then the 
quotient-geodesics are exactly the 
orbifold-geodesics of $X$.

On any $G$-horizontal geodesic  $\gamma $, all but discretely many points have the same isotropy group $H =G_\gamma$.  Hence the projection of $\gamma$ to $X$ is completely contained in the closure of the $H$-isotropy stratum of $X$.   
Therefore, the closure  of any isotropy stratum is \emph{totally quotient-geodesic} in $X$, in the sense that any quotient geodesic 
contained in the closure of the stratum initially, is contained in it for 
the whole time.

We would like to mention that the above observation  about the dependence  of the quotient-geodesics only on the metric structure of the quotient answers Question~4.6(1) in~\cite{AKLM2}.




\subsection{Luna-Richardson-Straume and minimal reductions}\label{reduction}
If $H$ is the principal isotropy group of the action of $G$ on $M$, then 
we have a canonical isometry $F/N\to M/G$, where $F$ is 
the closure of the set of all points of $M$ whose isotropy group is 
exactly $H$, and $N$ is the normalizer of $H$ in 
$G$~\cite{lr,str,grove-searle,GL}.  The action
of $N$ on $F$ has $H$ in its kernel; the induced action 
of $N/H$ on $F$ is called the \emph{principal isotropy reduction} 
of the action of $G$ on $M$.
It has trivial principal isotropy groups and the same quotient 
space as the original action. 

In case of a linear representation of a compact Lie group $G$ on a 
vector space $V$, the subset $F$ of $V$ is a subspace which we denote 
by $W$. Some properties of such and more general reductions (see below)  have been  investigated in \cite{GL}  to which we refer the reader 
for details. We will use the following special case of 
the results therein.

\begin{lem}\label{conn-irred-reduction}
Let $\rho:G\to\mathbf O(V)$ be a representation of a 
compact Lie group $G$. Assume $G$ is connected and its action on $V$ is 
irreducible, non-polar, infinitesimally polar and has cohomogeneity $c$
at least~$4$. Let $H$, $N$, $W$ 
be as above. Then the action of the identity component
$(N/H)^0$ on $W$  is irreducible. 
\end{lem}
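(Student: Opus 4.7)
The plan is to argue by contradiction. Suppose $L := (N/H)^0$ acts reducibly on $W := V^H$, so that $W = W_1 \oplus W_2$ with both summands nonzero and $L$-invariant. First, I would record the inherited properties of the minimal reduction. The canonical identification $W/(N/H) = V/G$ shows that the $N/H$-action on $W$ is non-polar, infinitesimally polar, and has cohomogeneity $c \geq 4$; passing to the identity component $L = (N/H)^0$ only produces an orbi-covering of the quotient, so all three properties transfer to $(L, W)$. In addition, the principal isotropy of $N/H$ on $W$ is trivial by the defining property of the minimal reduction, hence the principal isotropy of $L$ on $W$ is also trivial.

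Next I would exploit the decomposition through slice representations. For a principal point $w_1 \in W_1$ of the restricted $L$-action on $W_1$, the orbit $L \cdot w_1$ lies in $W_1$, so the normal space at $w_1$ decomposes $L_{w_1}$-invariantly as $(W_1 \ominus T_{w_1}(L \cdot w_1)) \oplus W_2$. Infinitesimal polarity forces the slice representation of $L_{w_1}$ on this normal space to be polar, and analogously at any principal $w_2 \in W_2$. Moreover, choosing a generic pair $(w_1, w_2)$ with $w_i$ principal in $W_i$, one has $L_{w_1} \cap L_{w_2} = L_{(w_1, w_2)} = \{e\}$ by the trivial principal isotropy, so $L_{w_1}$ and $L_{w_2}$ must jointly generate a sufficiently large portion of $L$. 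These slice constraints, together with the bound $c \geq 4$, severely restrict the possible pairs $(L_{w_i}, W_{3-i})$.

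The concluding step is to combine these restrictions with the irreducibility of $(G, V)$. The reduced data $(L, W_1 \oplus W_2)$ fall into the classification of reducible, non-polar, infinitesimally polar representations with trivial principal isotropy that is developed in \cite{GL}. Inspection of that list under the cohomogeneity condition $c \geq 4$ leaves only doublings of cohomogeneity-one representations; each such doubling arises as the minimal reduction only of a $G$-representation $V$ that is itself a sum of two equivalent low-cohomogeneity pieces (the entries in Table~1), contradicting the irreducibility of $V$ as a $G$-module.

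I expect the main difficulty to lie in the last step: matching the reducible entries in the classification of \cite{GL} to their possible origins $(G, V)$ requires careful bookkeeping of the slice representations and of how $G$ acts on $N$ and $H$ by conjugation. The cohomogeneity bound $c \geq 4$ is essential for this matching, since the entries of Table~1 themselves exhibit genuine reducible reductions in lower cohomogeneity.
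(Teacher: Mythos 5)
Your argument is logically incomplete at exactly the step the paper outsources to a specific structure theorem. The paper's proof is short: since $G$ is connected and $V$ is $G$-irreducible, Theorem~1.7 of~\cite{GL} asserts that if $(N/H)^0$ acts reducibly on $W$ then $(N/H)^0$ is a torus $T^{c-2}$ and $(N/H)^0\curvearrowright W$ is equivalent to the maximal torus of $\SU{c-1}$ acting on $\C^{c-1}$; that representation is then shown not to be infinitesimally polar for $c\geq 4$ by~\cite[Lemma~7.1]{GL}, a contradiction. No classification of all reducible reductions is needed, and no claim is made about lifting the reducible structure of $W$ back to a reducible structure of $V$.

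Your final paragraph asserts that if $(L,W)$ were a doubling of a cohomogeneity-one representation, then $V$ itself would have to be a sum of two equivalent pieces, contradicting irreducibility. This is the crucial gap: the reduction procedure does not preserve (ir)reducibility, so irreducibility of $V$ does not preclude a reducible reduction~$(L,W)$. Indeed, the whole point of the lemma is to prove exactly this non-obvious fact, and the only route offered in the paper is Theorem~1.7 of~\cite{GL}, which is a nontrivial structure result established there by separate arguments. Without it, you would have to independently reprove that an irreducible connected representation can have a reducible reduction only in the specific toric form described, which your slice-representation sketch does not do. A secondary issue: the classification you appeal to (all reducible, non-polar, infinitesimally polar representations with trivial principal isotropy) is not readily available in~\cite{GL}; what the present paper classifies in Section~5 (Proposition~\ref{red}) is the much narrower class of sums of two cohomogeneity-one summands, and there is no a priori reason for the hypothetical $(L,W_1\oplus W_2)$ to lie in that class.
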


\Pf Suppose, to the contrary, that the action of $(N/H)^0$ on $W$ 
is reducible. Since $G$ is connected and $V/G=W/N$, Theorem~1.7
in~\cite{GL} implies that $(N/H)^0$ is a torus $T^{c-2}$ and its action on 
$W$ is equivalent to the action of the maximal torus of $\SU{c-1}$
on $\C^{c-1}$. However, this action is not infinitesimally polar
for $c\geq4$~\cite[Lemma~7.1]{GL}, and this is a contradiction. \EPf

\medskip

In a very last step of the proof of main theorems we will  need to go through some lists of representations. To simplify the arguments excluding those candidates  one  can make use 
of the following slightly more general minimal reductions studied in \cite{GL}.  For a 
representation $\rho:G\to \mathbf O(V)$, a \emph{minimal reduction} of $\rho$ 
is  a
representation $\tau: K\to\mathbf O(U)$ with the same orbit
space, $V/G=U/K$, and lowest possible dimension of $K$.   It often happens
that the principal isotropy reduction is already a minimal reduction,
but there are easy examples in which it is not the case~\cite{GL}.

Note
that the representation $\rho$ is 
infinitesimally polar if and only if so is its minimal reduction $\tau$.
Moreover,  \lref{conn-irred-reduction} applies to this situation as well, 
showing in particular that if $G$ is connected and its action on $V$ 
is irreducible and infinitesimally polar then 
its minimal reduction group $K$ cannot have a non-trivial
toric connected component, unless the cohomogeneity of $\rho$ is at most $3$.



\subsection{Infinitesimally polar
representations with trivial principal isotropy groups}

The principal isotropy/minimal  reduction allows us to reduce some questions to the case of actions with trivial
principal isotropy groups.  In such a case,  
all slice representations must have trivial principal isotropy groups as well.  On the other hand, if the quotient space is Riemannian orbifold then all 
isotropy representations are polar. But polar representations with 
trivial principal isotropy groups are exceedingly rare:


\begin{lem}\label{slice}
Let $\rho$ be an 
 effective polar representation of a 
compact connected Lie group $G$ on $V$. Assume that the 
principal isotropy group of the representation is trivial. Then  
$G$ is finitely covered  by  $\U 1 ^{l_1} \times \SP 1 ^{l_2}$  and $\rho$ is orbit equivalent to 
$(l_1 \cdot\C \oplus l_2 \cdot \mathbb H \oplus  l_3 \cdot \R )$, where $\R$
denotes the trivial representation.  
Moreover there exists a connected 
normal subgroup
$H$ of $G$ of 
rank $1$ which is the isotropy group 
corresponding to a codimension $1$-stratum of the quotient space $V/G =
(\R ^+ ) ^{l_1+l_2} \times \R^{l_3}$. 
Finally, either $G$ is a torus, or $H$ can be chosen to be 
isomorphic to $\SP 1$.  
\end{lem}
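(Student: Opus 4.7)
The plan is to combine Dadok's reduction with a slice argument to show that each irreducible summand of $\rho$ has rank one and takes one of two explicit shapes. First, I would apply Dadok's theorem to replace $\rho$, up to orbit equivalence, by the isotropy representation of a simply connected Riemannian symmetric space $N$. Decomposing $N$ into a Euclidean factor and irreducible symmetric spaces of compact type, and using that the principal isotropy of a product action is the product of the principal isotropies of the factors, the hypothesis of trivial principal isotropy propagates to each factor. It therefore suffices to treat the irreducible case.

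Next, I would analyze the slice representation at a generic point $p$ of a codimension-one stratum of $V/G$. This slice representation is polar with trivial principal isotropy, and after splitting off the trivial direction tangent to the stratum, one is left with a polar, cohomogeneity-one representation of $H:=G_p^0$ with trivial principal isotropy and Weyl group $\Z_2$. This forces $H$ to act freely and transitively on a sphere, so that sphere must be a Lie group; classically this gives $H\in\{\U{1},\SP{1}\}$ with nontrivial slice factor $\C$ or $\mathbb H$, respectively.

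To globalize, I would use that the principal orbit of $\rho$ is diffeomorphic to $G$ (principal isotropy being trivial), hence is a compact connected Lie group. Applied to each irreducible factor, this means the corresponding principal orbit $K/M$ (with $M=Z_K(\mathfrak a)$) admits a free transitive action of a compact Lie group. For irreducible symmetric spaces of rank $\geq 2$, the principal orbit is either a generalized flag manifold with positive Euler characteristic (when $M$ contains a maximal torus of $K$) or a homogeneous space with non-abelian fundamental group (for instance, $\SO{3}/\Z_2^2$ in the $\SU{3}/\SO{3}$ case has $\pi_1$ equal to the quaternion group $Q_8$); neither is compatible with a Lie group structure. Hence only rank-one irreducibles survive, and among those only $\U{1}$ acting on $\C$ and $\SP{1}$ acting on $\mathbb H$ have principal orbits which are Lie groups.

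Assembling these conclusions, $\rho$ is orbit-equivalent to $l_1\cdot\C\oplus l_2\cdot\mathbb H\oplus l_3\cdot\R$, the group $G$ is finitely covered by $\U{1}^{l_1}\times\SP{1}^{l_2}$, and $V/G=(\R^+)^{l_1+l_2}\times\R^{l_3}$. Each rank-one direct factor of $G$ is a connected normal subgroup which coincides with the isotropy along the codimension-one stratum cutting out the corresponding summand; taking an $\SP{1}$-factor when $l_2\geq 1$ and a $\U{1}$-factor otherwise furnishes $H$. The main obstacle I anticipate is the rank-$\geq 2$ exclusion: the topological argument sketched still requires some case-by-case verification against Dadok's list to rule out orbit-equivalent replacements evading the Lie-group obstruction.
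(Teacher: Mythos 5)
There are genuine gaps in this plan that the paper's inductive argument is designed precisely to avoid. The most fundamental one is that triviality of the principal isotropy does \emph{not} propagate through orbit equivalence: $(\SP1,\Q)$ has trivial principal isotropy while the orbit-equivalent isotropy representation $(\SO4,\R^4)$ of $\bS^4$ has principal isotropy $\SO3$. So after replacing $\rho$ by the isotropy representation of a symmetric space you lose control of the principal isotropy of each $K$-irreducible factor (and, moreover, the $K$-irreducible decomposition of $\Lp$ need not be a $G$-invariant decomposition of $V$). Your topological fallback is also incomplete: while abelian $\pi_1(G)$ does project across products and rules out factors with non-abelian $\pi_1$, the Euler-characteristic constraint does not---$\chi(G)=0$ only forces \emph{some} factor to have vanishing Euler characteristic and does not exclude a flag-manifold factor with $\chi>0$---and the asserted dichotomy ``flag manifold or non-abelian $\pi_1$'' for rank~$\geq2$ irreducible factors is not established. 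The paper sidesteps all of this by inducting on the number of $G$-irreducible summands of $V$ and applying \cite[Theorem~4]{D} to pass from $(G,V_1)$ to the orbit-equivalent $(G_{v_2}^0,V_1)$; the smaller group $G_{v_2}^0$ \emph{does} inherit trivial principal isotropy, since $G_{v_2}^0\cap G_{v_1}\subset G_{v_1+v_2}=\{1\}$, and that is what makes the induction close.

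The last step of your proposal---that each rank-one direct factor of $G$ coincides with the isotropy group along a codimension-one stratum---is asserted, not proved, and is precisely the delicate part of the second claim. Once one knows that $G$ is finitely covered by $\U1^{l_1}\times\SP1^{l_2}$ and $\rho$ is orbit-equivalent to $l_1\cdot\C\oplus l_2\cdot\Q\oplus l_3\cdot\R$, the action need \emph{not} split as a direct product of rank-one representations: a single $\SP1$-factor can act on two $\Q$-summands at once (through $\SO4\cong\SP1\cdot\SP1$), in which case it is not the isotropy group of any codimension-one stratum. The paper's proof of the second assertion analyzes exactly this combinatorics, exhibiting a diagonal $\SO2$ in the principal isotropy to derive a contradiction whenever no $\SP1$-factor acts on a single $\Q$-summand. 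Your proposal skips this entirely, so the argument for the existence and normality of $H$ is missing.
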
  

\Pf We prove the first assertion by induction on the number
of irreducible components of $V$. Without loss of generality, 
we may assume that $V$ has no trivial components. Consider first the case
in which $V$ is irreducible. A quick enumeration of polar irreducible
representations of connected groups yields that $(G,V)$ is equal 
to~$(\U1,\C)$ or~$(\SP1,\Q)$. In fact, we have only to examine the 
list of isotropy representations of compact irreducible symmetric spaces of 
maximal rank, which is a short list, and the list of additional
polar irreducible representations in~\cite{EH}, which is even shorter.  

Assume next $V=V_1\oplus V_2$ is a decomposition into $G$-invariant
subspaces. Choose a regular point $v_i\in V_i$ for the restriction
$(G,V_i)$ for $i=1$, $2$. Owing to~\cite[Theorem~4]{D}, 
$(G_{v_2}^0,V_1)$ (resp. $(G_{v_1}^0,V_2)$) is orbit equivalent to 
$(G,V_1)$ (resp. $(G,V_2)$). The assumption of triviality of principal
isotropy groups for $(G,V)$ implies that $(G_{v_2}^0,V_1)$ (resp. $(G_{v_1}^0,V_2)$) has the same property and it is effective;
of course, it is also polar. It follows
from the inductive hypothesis that
$G_{v_2}^0\times G_{v_1}^0$ is covered by 
$\U1^{l_1}\times\SP1^{l_2}$ and its action
on $V$ is orbit equivalent to $l_1\cdot\C\oplus\l_2\cdot\Q$. 
Again by~\cite[Theorem~4]{D}, $G=G_{v_2}^0\cdot G_{v_1}^0=G_{v_1}^0\cdot G_{v_2}^0$ and $(G,V)$ is orbit equivalent to 
$(G_{v_2}^0,V_1)\times(G_{v_1}^0,V_2)$. Since $(G,V)$ has trivial principal
isotropy groups, $G_{v_1}^0\cap G_{v_2}^0=\{1\}$, showing that 
$G$ is (finitely) covered by $G_{v_1}^0\times G_{v_2}^0$.

Note that the action of  $\tilde G:=\U1^{l_1}\times\SP1^{l_2}$ on $V=l_1\cdot\C\oplus\l_2\cdot\Q$
needs not be a direct product of representations.   
However, $G$ acts on each $\C$-summand as $\U 1$; 
in particular $\SP1^{l_2}$ acts trivially on $l_1 \cdot \C$. 
On the other hand, $G$ acts 
on each $\Q$-summand of~$V$ as one of $\SP1$, $\U2$, $\SO4$.


We next prove the second assertion in the statement.   
The claim is that we can find a $\U1$- or $\SP1$-normal subgroup 
$H$ of $G$ which fixes all 
summands but one 
in the decomposition 
$V=l_1 \cdot\C \oplus l_2 \cdot \mathbb H \oplus  l_3 \cdot \R $.
Again we may assume $l_3=0$.  If $l_2=0$ then $\rho (G)$ is  
the maximal torus of $\U {l_1}$, the representation can be written as
the $l_1$-fold direct product of the representation of $\U 1$ on $\C$ 
and we can take $H$ to be any of these $\U 1$-factors. Consider now  $l_2 >0$, let $G_1,\ldots,G_{l_2}$ denote 
the $\SP1$-factors of $G$ and let $V_1,\ldots,V_{l_2}$ denote the $\Q$-summands 
of $V$. Each $G_i$ can act non-trivially only on the $\mathbb H$-summands.  
Thus assuming, contrarily to the claim, that no such factor corresponds to a 
codimension one stratum, we see that each $G_i$ acts on at least two $V_j$'s.
However, on each $V_j$ at most two of the $G_i$'s can act non-trivially.  
Thus each $G_i$ acts non-trivially exactly on two $V_j$'s. 
By relabeling, we may assume that there exists an integer $m$ such that 
$2\leq m\leq l_2$ and $G_i$ acts non-trivially on $V_i$, $V_{i+1}$ for
$i=1,\ldots,m-1$ and $G_m$ acts on $V_m$, $V_1$.  
Now the action of $G'=G_1 \times \cdots \times G_m$
on $V'=V_1\oplus\cdots\oplus V_m$ is a direct factor of the action of 
$G$ on $V$. 
However, $(G',V')$ has principal isotropy subgroup $\SO2$, given by a maximal 
torus in the diagonal $\SP1$-subgroup of $G'$, and this contradicts the 
fact that the original representation has trivial principal isotropy groups. 
\EPf
\medskip


The next lemma will allow us to apply Lemma~\ref{slice} to certain
slice representations.

\begin{lem}\label{iso-conn}
Let a connected Lie group $G$ act by isometries on a simply connected Riemannian manifold $M$. Assume that  the principal isotropy groups of $(G,M)$ are trivial. If the quotient $M/G$ is a good Riemannian orbifold then all isotropy groups of  $(G,M)$ are connected. 
\end{lem}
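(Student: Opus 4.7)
The plan is to proceed by contradiction: assume some isotropy $H:=G_p$ at $p\in M$ is disconnected, and derive a contradiction by exhibiting an element of the local orbifold group at $[p]$ which is non-trivial in $\pi_1^{orb}(X)$ (by goodness) but forced to be trivial by a lifting construction that exploits the simple-connectedness of $M$ and the connectedness of $G$.

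First I would analyse the local structure at $[p]$. The slice representation $(H,V_p)$ is polar (since $X$ is an orbifold) and inherits the trivial principal isotropy of $(G,M)$. For a section $\Sigma\subset V_p$, set $W:=N_H(\Sigma)/Z_H(\Sigma)$, which is the local orbifold group of $X$ at $[p]$, and $W^0:=N_{H^0}(\Sigma)/Z_{H^0}(\Sigma)$. Triviality of principal isotropy forces $Z_H(\Sigma)=1$, since any such element stabilises a principal point of $\Sigma$, whose $H$-stabiliser is already trivial. Together with the identity $H=H^0\cdot N_H(\Sigma)$ (sections for $(H^0,V_p)$ are permuted by $H$ only up to the $H^0$-action), this yields a natural isomorphism $H/H^0\cong W/W^0$. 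Any $h\in H\setminus H^0$ therefore determines a non-trivial $w\in W$; by goodness of $X$ the inclusion $W\hookrightarrow\Gamma:=\pi_1^{orb}(X)$ is injective, so $w\neq 1$ in $\Gamma$.

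Second I would realise $w$ as the class of an explicit loop in $X$ and then force it to be trivial. Pick a principal point $q$ in $V_p$ near the origin (possible since the principal isotropy is trivial), and let $\beta$ be the image under $\exp_p$ of the straight segment from $q$ to $hq$ in $V_p$. The projection $\beta_X$ is a loop at $[q]$ whose class in $\pi_1^{orb}(X,[q])$, computed via the local model $V_p/H\cong\Sigma/W$, equals $w$. Now construct a continuous lift $\tilde\pi\colon M\to\tilde X$ of the quotient map through the universal orbi-covering $\tilde X\to X$. Since $X$ is good, $\tilde X$ is a manifold; since $\pi_1(M)=1$, the standard lifting criterion for orbi-coverings holds vacuously, so the lift exists. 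It is automatically $G$-invariant: for any $g\in G$, a path in $G$ from $1$ to $g$ gives a path in $M$ whose projection in $X$ is constant at $[m]$; the unique lift starting at $\tilde\pi(m)$ is therefore constant, forcing $\tilde\pi(gm)=\tilde\pi(m)$. Applied to $h$, the path $\tilde\pi\circ\beta$ becomes a closed loop in $\tilde X$, and being the unique lift of $\beta_X$ from $\tilde\pi(q)$, this forces $[\beta_X]=1$ in $\Gamma$, i.e.\ $w=1$. This contradicts $w\neq 1$.

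The main obstacle is justifying the existence and $G$-invariance of the lift $\tilde\pi$. One must interpret the orbit map $M\to X$ as an orbifold morphism (working in local slice coordinates $G\times_H V_p\to V_p/H\cong\Sigma/W$) and invoke an orbifold-covering lifting theorem valid for continuous maps from a simply connected manifold into a good orbifold; once these foundational ingredients are in place, the $G$-invariance of the lift and the subsequent contradiction follow formally from uniqueness of path-lifting.
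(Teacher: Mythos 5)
Your proof takes a genuinely different (and much longer) route than the paper's, which is a two-line citation: by \cite[Theorem~1.8]{Lyt}, under these hypotheses every slice representation has connected orbits, and then triviality of the principal isotropy forces each $G_p$ to be connected (because a principal orbit of $(G_p,V_p)$ is a copy of $G_p$ itself). Your slice-theoretic preparation (polarity, $Z_H(\Sigma)=1$, $H=H^0\cdot N_H(\Sigma)$, the isomorphism $H/H^0\cong W/W^0$, injectivity of the local group into $\Gamma$) is fine as far as it goes; the problem is the lifting step, which you yourself flag as the main obstacle and then dismiss too quickly.

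The assertion that a continuous $G$-invariant lift $\tilde\pi:M\to\tilde X$ exists ``because $\pi_1(M)=1$'' does not hold up. A $G$-invariant lift is the same thing as a continuous section $s:|X|\to|\tilde X|$ of the underlying branched covering. Over the regular part, $M_{\mathrm{reg}}\to X_{\mathrm{reg}}$ is a fiber bundle with connected fiber (here $\cong G$), so $\pi_1(M_{\mathrm{reg}})\twoheadrightarrow\pi_1(X_{\mathrm{reg}})$ is surjective; the standard lifting criterion then forces the image to sit inside the subgroup corresponding to \emph{some} component of $\tilde X_{\mathrm{reg}}\to X_{\mathrm{reg}}$, i.e.\ forces some component of $\tilde X_{\mathrm{reg}}$ to map homeomorphically onto $X_{\mathrm{reg}}$. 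This is false for a general good orbifold (e.g.\ a cone $\C/\Z_n$, whose regular part in the universal cover is the connected $n$-fold cover $\C^*$), and is exactly the statement that $X$ is a \emph{Coxeter} orbifold, so that $\tilde X_{\mathrm{reg}}$ decomposes into chambers. That $X$ good implies $X$ Coxeter under the present hypotheses ($G$ connected, $M$ simply connected) is Lemma~\ref{noex} in the paper, which is again a restatement of \cite[Theorem~1.8]{Lyt}. So your argument, once the gap is filled, ends up invoking precisely the theorem the paper cites; the appeal to $\pi_1(M)=1$ alone is not a substitute. A cleaner version of your plan would be: by Lemma~\ref{noex}, $X$ is Coxeter, hence $|\tilde X|$ contains a closed chamber $C$ isometric to $|X|$, and $\tilde\pi:=(\,|X|\cong C\hookrightarrow\tilde X\,)\circ\pi$ is the desired $G$-invariant lift; then your chasing of $h\in H\setminus H^0$ through $W$ and $\Gamma$ goes through. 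But at that point the paper's argument via connected slice orbits is shorter and avoids the local Weyl-group bookkeeping entirely.
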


\begin{proof}
From  \cite[Theorem 1.8]{Lyt} we know that all slice representations have connected orbits. Since the slice representations have trivial principal isotropy groups, all isotropy groups must be connected.
\end{proof}




\section{Discussion of the examples} \label{secdiscuss}
In  this section we discuss the representations of compact connected groups $G$ appearing  in  \tref{thmsum}.

\subsection{Polar representations} They are listed in \cite[tables~8.11.2
and~8.11.5]{Wolf}
and~\cite{EH,B,fgt}, and their 
properties are well known  by now.  These are exactly the representations 
whose spherical quotient has constant curvature $1$.

\subsection{Almost free actions}\label{subsecfree} We call a representation
$\rho$ of the group $G$ almost free, if $G$ acts almost freely on the corresponding unit sphere. In this case  $G$ has rank one (\lref{posrank}), hence it is either $\U1$, or it is covered 
by $\SU 2$.

If $G=\U 1$ then any almost free representation is the sum of non-trivial 
 irreducible ones which are 
all complex one-dimensional. Such irreducible representations 
are  naturally parametrized by positive  integers  $r$, 
the order of the kernel. 
The quotient space $X^{2l}=\mathbb S^{2l+1} /\rho (\U 1 )$    is called a 
\emph{weighted complex projective space}, with weights $r_1,\ldots,r_{l+1}$,
where $r_1,\ldots,r_{l+1}$ are the integers associated to each one of the summands 
of $\rho$.

If $G$ is covered by $\SU 2$ then all almost free representations are described similarly 
(cf.~\cite{gromollsph}).  It turns out that any almost free representation 
$\rho$ of $G$ is a sum of irreducible representations of quaternionic type
and $G=\SU2$.
For any natural number $r\geq 0$ there exists exactly one such irreducible 
representation of complex dimension~$2r$; the  almost free representations of $\SU 2$ are arbitrary 
sums of such representations.    
It seems natural to call the quotient space $X ^{4l}=\mathbb S^{4l+3} /\rho(\SU 2)$
a \emph{weighted quaternionic projective space}, also in view of the following 
remark.  

\begin{rem} 
The topology of the weighted projective spaces is very close to the topology of the usual projective spaces. In both cases the orbifold cohomology of the quotient $X$ 
can be computed
using the fibration $G\to \bS^n \to \hat X$, where $\hat X$ is 
Haefliger's classifying space
of the orbifold $X$. The cohomology of $\hat X$, which by 
definition coincides with the orbifold cohomology of $X$, 
is generated by one element $e$ in degree $2$ or $4$, respectively.  
The only relation is $p\cdot e^k=0$ for some natural number $p$. This number 
$p$ is $1$ if and only if the representation acts freely on the 
sphere, i.e., if and only if  the  representation is a Hopf action. 
In any case $X$ has the same rational homotopy type as the corresponding 
projective space.
\end{rem}

All weighted projective spaces have trivial orbifold fundamental group.  Thus they are good orbifolds if and only if they are Riemannian manifolds. This happens if and only if the action of $G$ on $\bS^n$ is free, i.e., if and only if the quotient $X$ is a classical projective space.    
Thus $X$ is diffeomorphic to a sphere if and only if $l=1$, in which case it has constant curvature $4$.

\subsection{Non-polar representations of cohomogeneity $3$}\label{subsecstraume}
This case is studied in detail in \cite{str}. There  it is shown that any such 
representation has a reduction to a representation of a one-dimensional group 
on $\R^4$.    In fact, if $G \neq \U 1$ then the reduction is either a finite extension
of the Hopf action or a two-fold extension of the action of $\U 1 $ on 
$\C\oplus\C=\R^4$ with
parameters $(1,2)$.  In the second case, the quotient space $X$ is a disc with one singularity with 
angle $\pi/2$.   
In any case,
if $X$ is a good orbifold, then it has constant curvature $4$.

\subsection{Complex case}\label{cx-case}  
We discuss here the  second and the third examples from  Table 1
(see also \cite[Prop.~5.2]{Radeschi2} where the same result is proven from a very different point of view).
First we consider the double of the vector representation of $\U 2$.
We view $\bS^7\subset\Q^2$ and $\U2$ as left multiplication by a unit quaternion
and right multiplication by a unit complex number.  
To obtain $\mathbb S^7 /\U 2$ we first divide out $\SU 2$ to obtain  
$\Q P ^1$, the round sphere of curvature $4$ and then
we have to divide out the remaining isometric action of $\U 1 =\U 2 /\SU 2$  on $\Q P ^1$.

In homogeneous coordinates 
$(q_0:q_1)e^{i\theta}=(q_0e^{i\theta}:q_1e^{i\theta})\in\Q P^1$
corresponds to $e^{-i\theta}q_0^{-1}q_1e^{i\theta}\in\Q\cup\{\infty\}=\bS^4(\frac12)$ 
and so $e^{i\theta}\in\U1$ fixes the plane spanned
by $1$, $i$ and rotates the plane spanned by $j$, $k$ by $2\theta$.
Hence the action
of $\U 1$ on $\Q P ^1$ is polar and the quotient is a $3$-dimensional 
hemisphere $\bS^3_+(\frac12)$ of constant curvature $4$.

The  group $\SU n$ acting on $\C^n\oplus\C^n$ is orbit-equivalent to
$\U n$ for $n\geq3$. Moreover, 
we can apply the reduction procedure to $\U n$ 
and obtain $(\U2,\C^2\oplus\C^2)$
as a minimal reduction. Hence the second and third
representations in Table~1 yield 
a quotient isometric to~$\bS^3_+(\frac12)$.

\subsection{Quaternionic case}\label{cohom6}
Herein we deal with the four last representations
in Table~1; again a different approach can be found in 
\cite[Prop.~5.2]{Radeschi2}. 

Consider first $\SP2$ acting on $V=\Q^2\oplus\Q^2$. 
It is a reduction
of $(\SP n,\Q^n\oplus\Q^n)$ for $n\geq3$, so 
the quotient
$X=\mathbb S^{8n-1} /\SP n=\mathbb S^{15} /\SP 2$ has a non-empty boundary. 
The representation has trivial principal isotropy group, hence 
$X$ has dimension $5$.  
The normalizer of $\SP 2$ in 
$\mathbf O(V)$
equals $\SP2\cdot\SP2$.  Thus we have a non-trivial  
action of $\SP 2$ on $X$ which 
leaves the boundary invariant, fixes the unique 
point $x$ of maximal distance to $\partial X$, and 
acts as $\SP2/\Z_2=\SO5$ on the tangent space at $x$. 
The quotient of the $\SP2$-action on $X$ is given by 
$\bS^{15} /\SP 2 \cdot \SP2$.
This space is known to be an interval of length~$[0,\frac\pi4]$,
which says that $\SO 5$ acts on $X$ with cohomogeneity one.
Now we see that $X$ must be a  hemi-sphere 
with a rotationally invariant metric; we are going 
to show that it has constant curvature $4$.

Let $\gamma:[0,\frac\pi4]\to V$ be the horizontal 
geodesic   
\[ \gamma(r)  = \left(\begin{array}{cc}\cos r&0\\0&\sin r\end{array}\right) \]
Its projection under $\pi:\bS^{15}\to X$ 
starts at a boundary point of $X$ and ends at~$x$.
Note that $\pi^{-1}(\partial X)$ consists of pairs of linearly dependent
vectors over $\Q$, $\pi\left(\left\{\left(\begin{array}{cc}*&*\\0&0\end{array}\right)\right\}
\right)=\partial X$, and
\[ \partial X=\left\{\left(\begin{array}{cc}*&*\\0&0\end{array}\right)\right\}\Big/
\left(\begin{array}{cc}\SP1&0\\0&1\end{array}\right)=\bS^7/\SP1=\Q P^1
=\bS^4(1/2).\]

The $\SO5$-orbit through $\pi(\gamma(r))$ for
$0< r < \frac\pi4$ is a $4$-sphere of constant curvature.
In fact, due to the presence of $\SO 5$-symmetry the metric on $X$ has the form
$dr ^2 + f^2(r)\, \omega $, where $\omega$ is the metric of constant curvature $4$
on $\mathbb S^4$ and $f$ is a non-negative function. 
Moreover, $f(r)$ is equal to the quotient 
$||\pi_*\xi||_{\pi(\gamma (r))} / ||\pi_*\xi ||_{\pi(\gamma (0))}$, 
for an arbitrary Killing field $\xi$ on $\bS^{15}$ induced by the
Lie algebra $\mathfrak{sp}(2)$ acting on the right 
whose $\pi$-horizontal component does not vanish at $\gamma (0)$.
Now we consider the unit Killing field $\xi$ induced by 
\[ \left(\begin{array}{cc}0&1\\-1&0\end{array}\right) \]
(quaternionic matrix) multiplying $\gamma(r)$
on the right. This Killing field 
is not $\pi$-horizontal with respect to~$\pi$. 
Its $\pi$-vertical component is parallel to the unit 
Killing field $\eta$ on $\bS^{15}$ 
induced by the same matrix 
multiplying $\gamma(r)$ on the \emph{left}. Taking scalar products, the 
$\pi$-vertical
component of $\xi$ has length $\sin(2r)$ 
and the $\pi$-horizontal one, $\cos(2r)$. It follows that $f(r)= \cos (2r)$.
Hence we see that $X$ is a hemi-sphere $\bS^5_+(1/2)$ of constant 
curvature~$4$.  


\medskip

Similarly, the last three representations in Table~1  have reductions 
to the respective representations with $n=2$, so we may assume 
$n=2$ in the discussion of the remaining orbit spaces. In turn, these orbit 
spaces are obtained by dividing $X$ by a subgroup of $\SP2$ acting on $X$
whose action is induced by right multiplication on $V$.  
Namely, we consider $\SP1\times\SP1$, $\Delta_{\mathbf{Sp}(1)}$ (diagonal), 
$\mathbf T^2$, and $\Delta_{\mathbf T^1}=\U1$; here the first group is included
only for the sake of completeness, whereas the remaining three
yield the representations of interest to us. These groups
fix $x$, so it is enough 
to understand the action on $T_xX$: this is given by conjugation. 
Note that $T_xX\cong\mathcal H_x=\langle e_0,e_1,e_2,e_3,e_4\rangle$ 
where 
\[ e_0=\left(\begin{array}{cc}1&0\\0&-1\end{array}\right),\ 
e_1=\left(\begin{array}{cc}0&1\\1&0\end{array}\right),\ 
e_2=\left(\begin{array}{cc}0&-i\\i&0\end{array}\right),\ 
e_3=\left(\begin{array}{cc}0&-j\\j&0\end{array}\right),\ 
e_4=\left(\begin{array}{cc}0&-k\\k&0\end{array}\right). \]
Let $a$, $b\in\Q$. Then:
\[ \U1:\ \left(\begin{array}{cc}e^{-i\theta}&0\\0&e^{-i\theta}\end{array}\right)
\left(\begin{array}{cc}0&a\\b&0\end{array}\right)
\left(\begin{array}{cc}e^{i\theta}&0\\0&e^{i\theta}\end{array}\right)
=\left(\begin{array}{cc}0&e^{-i\theta}ae^{i\theta}\\e^{-i\theta}be^{i\theta}&0\end{array}\right) \]
fixes $\langle e_0,e_1,e_2\rangle$ and acts as $\SO2$ on  
$\langle e_3,e_4\rangle$;
\[ \mathbf T^2:\ \left(\begin{array}{cc}e^{-i\theta}&0\\0&e^{-i\varphi}\end{array}\right)
\left(\begin{array}{cc}0&a\\b&0\end{array}\right)
\left(\begin{array}{cc}e^{i\theta}&0\\0&e^{i\varphi}\end{array}\right)
=\left(\begin{array}{cc}0&e^{-i\theta}ae^{i\varphi}\\e^{-i\varphi}be^{i\theta}&0\end{array}\right) \]
fixes $\langle e_0\rangle$ and acts as $\SO2\times\SO2$ on  
$\langle e_1,e_2\rangle\oplus\langle e_3,e_4\rangle$;
\[ \SP1:\ \left(\begin{array}{cc}q^{-1}&0\\0&q^{-1}\end{array}\right)
\left(\begin{array}{cc}0&a\\b&0\end{array}\right)
\left(\begin{array}{cc}q&0\\0&q\end{array}\right)
=\left(\begin{array}{cc}0&q^{-1}aq\\q^{-1}bq&0\end{array}\right) \]
fixes $\langle e_0,e_1\rangle$ and acts as $\SO3$ on  
$\langle e_2,e_3,e_4\rangle$;
\[ \SP1\times\SP1:\ \left(\begin{array}{cc}q_1^{-1}&0\\0&q_2^{-1}\end{array}\right)
\left(\begin{array}{cc}0&a\\b&0\end{array}\right)
\left(\begin{array}{cc}q_1&0\\0&q_2\end{array}\right)
=\left(\begin{array}{cc}0&q_1^{-1}aq_2\\q_2^{-1}bq_1&0\end{array}\right) \]
fixes $\langle e_0\rangle$ and acts as $\SO4$ on  
$\langle e_1,e_2,e_3,e_4\rangle$.

It follows that the quotient of $X$ by those groups is respectively
\[ \bS^4_{++}(1/2),\ \bS^3_{+++}(1/2),\ \bS^3_{++}(1/2),\ \bS^2_{++}(1/2) \]
(cf.~\cite[Table~II, type~II]{str} in the last case).

\subsection{Exceptional case}
Finally, we deal with the action of $G=\Spin9$ on $V=\R^{16}\oplus\R^{16}$. 

Let $H$ be the principal isotropy group, $W=V^H$ its fixed point
set. The isotropy subgroup at a point in $\R^{16}\oplus\{0\}$ is $\Spin7$. 
The action of $\Spin7$ on $\{0\}\oplus\R^{16}$ decomposes as
$\R\oplus\R^7\oplus\R^8$, the sum of the vector and spin representations
plus a trivial component. This is a slice representation which is polar,
and corresponds to a unique non-principal minimal orbit type. 
It follows that our representation is infinitesimally polar. 
It also follows that $H\cong\SU3$ and thus the dimension 
of $X=\mathbb S^{31}/G$ is $3$. 

The identity component of the normalizer $N_G(H)^0=H\cdot Z_G(H)^0$,
where $Z_G(H)$ denotes the centralizer of $H$ in $G$. One computes
that $Z_G(H)^0\cong\U2$ (for instance, by using root systems;
alternatively compare~\cite[p.~137]{Gor2}).
Let $C:=(N_G(H)/H)^0$. Then  $C\cong\U2$ and $\dim W=8$. 
Since~$(G,V)$ is a doubling representation, so is $(C,W)$;
hence $W=\C^2\oplus\C^2$ (one can also refer to
Proposition~\ref{red} below or \cite{Gor2}).

Now $X'=W/C=\bS^3_+(\frac12)$ due to subsection~\ref{cx-case}, 
and $X=\mathbb S^{31} /G $ is a finite quotient of $X'=W/C$ by a 
group $\Gamma$ generated by reflections~\cite[Prop.~1.2]{GL}.
Note that $\Gamma$ is non-trivial as 
$(G,V)$ has more than two orbit types.

There is an isometric $\SO2$ action
on $X$ with quotient $X/\SO2 =\bS^2(\frac 1 2)_{+++}$.
Indeed there is a circle in the centralizer of $\Spin9$ in $\mathbf O(V)$,
and $\mathbb S^{31}/\Spin9\times\SO2$ is a spherical triangle with 
three right angles~\cite[Table~II, type III${}_4$]{str}. 

The isometry group of $\bS^3 (\frac 1 2) _+$ is $\OG3$ acting through 
the canonical action on the two-dimensional boundary sphere.  
The reflections of $X$ are exactly the reflections in $\OG3$. 
If $\Gamma$ has more than two elements, it contains two different 
reflections, and the quotient  of the boundary sphere by $\Gamma$  
does not admit a circle action, hence this case cannot occur.
Hence $\Gamma$ has two elements, the identity and a reflection.
All reflections in   the isometry group of $\bS^3 (\frac 1 2) _+$ are conjugate
and the quotient of $X'$ under the action of any such reflection is 
thus $X=\bS^3 (\frac 1 2) _{++}$.

\begin{rem} 
A conceptual proof that 
the above orbit spaces have constant curvature
(specially in the case of simple groups, see~\cite{Gor2})
is to check that the representations are taut
and infinitesimally polar, and use~\cite[Theorem~3.20]{Wiesen1}
and ~\cite[Corollary~6.12]{TTh2}.  
\end{rem}

\section{Positively curved orbifolds} \label{secorbifold}
\subsection{Coxeter orbifolds} All orbifolds below are considered with some complete Riemannian metric.
Recall that an orbifold is called a \emph{Coxeter orbifold} if all of its local
groups are 
finite Coxeter groups acting as reflection groups on the corresponding tangent spaces.
In \cite{Davis} such orbifolds  are called \emph{reflectofolds}.  We denote by $X_{reg}$ 
the set of manifold  points of an orbifold.

We start with some preliminaries:

\begin{lem} \label{cox}
An orbifold $X$ is a Coxeter orbifold if and only if  $X\setminus X_{reg} =\partial X$. 
\end{lem}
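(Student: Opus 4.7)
The statement is local at each $x \in X$: the plan is to work in an orbifold chart $T_xX/\Gamma_x$, with $\Gamma_x \subset \mathbf O(T_xX)$ the local isotropy group, and to observe that the boundary of $X$ near $x$ corresponds to the image of the union of the reflection mirrors of $\Gamma_x$, while $X_{reg}$ corresponds to the complement of the union of fixed sets of all non-identity elements of $\Gamma_x$. The lemma thereby reduces to the following claim about a finite orthogonal group $\Gamma$ acting on a Euclidean space $V$: \emph{$\Gamma$ is generated by reflections if and only if every non-trivial point stabilizer $\Gamma^v$ contains a reflection.}

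For the forward direction I would cite Steinberg's fixed-point theorem: in a finite reflection group, the stabilizer of any $v \in V$ is the parabolic subgroup generated by all reflections whose mirror contains $v$, hence is non-trivial only if it contains such a reflection.

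For the converse, let $R \triangleleft \Gamma$ denote the subgroup generated by all reflections and let $N$ be an open fundamental chamber of $R$. The key structural step is that, since $R$ permutes the chambers simply transitively, the chamber stabilizer $\Gamma_N := \{g \in \Gamma : g(N) = N\}$ satisfies $\Gamma_N \cap R = \{1\}$ and $\Gamma = R \cdot \Gamma_N$, yielding an isomorphism $\Gamma_N \cong \Gamma/R$ and a semidirect decomposition $\Gamma = R \rtimes \Gamma_N$. Arguing by contradiction, assume $\Gamma \neq R$; then $\Gamma_N$ is non-trivial, acts linearly on $V$, and preserves the open convex cone $N$. Picking any $v_1 \in N$, the centroid
\[ v_0 := \frac{1}{|\Gamma_N|} \sum_{h \in \Gamma_N} h v_1 \]
still lies in $N$ and is fixed by $\Gamma_N$. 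Since $v_0$ lies in the open chamber, $R^{v_0} = \{1\}$, so $\Gamma^{v_0} = \Gamma_N$ is non-trivial; but this stabilizer contains no reflection of $V$, because every such reflection of $\Gamma$ lies in $R$. This contradicts the hypothesis, so $\Gamma = R$ as required.

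There is no serious obstacle in this plan. The only point demanding care is the initial translation between Alexandrov-geometric notions (boundary, regular set) and the group-theoretic conditions on $\Gamma_x$; once that dictionary is in place, the averaging argument over the chamber stabilizer is essentially forced.
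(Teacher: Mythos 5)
Your proof is correct, and its overall architecture mirrors the paper's: reduce to a local chart $V/\Gamma$ with $\Gamma$ finite and linear, pass to the normal reflection subgroup $R=\Gamma_{refl}$, and show that if $\Gamma\neq R$ then some point of an open $R$-chamber has a non-trivial $\Gamma$-stabilizer containing no reflection, contradicting $X\setminus X_{reg}=\partial X$. The place where you genuinely diverge is in how the offending fixed point is produced. The paper works on the quotient orbifold $X'=V/R$ with the residual action of $\Gamma'=\Gamma/R$, notes that $X'_{reg}$ is a contractible manifold, and invokes the topological fact (a Smith-theory/Lefschetz-type input) that a non-trivial finite group cannot act freely on a contractible manifold. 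You instead stay in $V$, observe that the open chamber $N$ is convex and $\Gamma_N$-invariant, and average a point of $N$ over $\Gamma_N$ to get an explicit $\Gamma_N$-fixed point $v_0\in N$ with $\Gamma^{v_0}=\Gamma_N$. This barycenter step is more elementary and self-contained (no appeal to Smith theory), at the price of a little more bookkeeping with the semidirect decomposition $\Gamma=R\rtimes\Gamma_N$; conversely, the paper's topological shortcut is shorter once the fixed-point theorem is granted. Your translation of $\partial X$ into "mirrors of reflections" and of $X\setminus X_{reg}$ into "union of fixed sets" is correct (the closure of codimension-one strata is exactly the union of mirror hyperplanes, as no mirror can be contained in a smaller fixed subspace), and your use of Steinberg's theorem for the forward direction is a sound, if slightly heavier, substitute for the paper's appeal to the structure of Coxeter chambers.
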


\begin{proof}
 The claim and conditions are local.  Thus we may assume that $X$ has the form
$X=\R ^n /\Gamma$ for a finite group $\Gamma$ of linear isometries.

   The only if direction is clear, due to the structure of Coxeter chambers.  
Assume now that any non-regular point
of $X$ is contained
in $\partial X$.  Let $\Gamma _{refl}$ be the normal subgroup of $\Gamma$ 
generated by reflections.  Thus $\Gamma _{refl}$ is a finite reflection 
group and we need to prove 
$\Gamma =\Gamma _{refl}$. 

 Consider the Coxeter chamber $X'= \R^n /\Gamma _{refl}$ and the induced 
effective, isometric action of $\Gamma ' =\Gamma /\Gamma_{refl}$ on the 
orbifold $X'$. 
The set of regular
points $X'_{reg}$ is homeomorphic to $\R^n$, and it  
is preserved under the action of $\Gamma '$.   Hence, if $\Gamma '$ is 
non-trivial, the action of the finite group $\Gamma '$
on the contractible manifold $X'_{reg}$ cannot be free.  On the other 
hand, $\Gamma'$ has no reflections, thus  the set of fixed points in $X' _{reg}$ of any non-trivial element in $\Gamma'$ has codimension at least two. 
Therefore,  the  image  of this  set of fixed points   in $X$ consists 
of singular points outside of $\partial X$.

 We deduce that this set of fixed points is empty, hence the action 
of $\Gamma '$ on $X'_{reg}$ is free, hence $\Gamma'$ is the trivial 
group and $\Gamma =\Gamma _{refl}$. 
\end{proof}

The above lemma  is closely related to the following result which we 
quote  from \cite[Theorem~1.8]{Lyt}:

\begin{lem} \label{noex}
Let the connected group $G$ act by isometries on a simply connected Riemannian manifold $M$.   Assume that the quotient $X=M/G$ is a Riemannian orbifold.
The orbifold $X$ is a Coxeter orbifold if and only if $X\setminus \partial X$ is a 
good orbifold.  In particular, if $X$ is a good orbifold, it is automatically a Coxeter
orbifold.
\end{lem}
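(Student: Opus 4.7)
The plan splits into the easy forward direction and the less trivial reverse direction. For the forward direction, I would simply invoke \lref{cox}: if $X$ is a Coxeter orbifold, then $X\setminus X_{reg}=\partial X$, so $Y:=X\setminus\partial X$ coincides with the manifold part $X_{reg}$, which is tautologically a good orbifold.

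For the reverse direction, assume $Y$ is good. By \lref{cox} it is enough to prove that $Y$ is in fact a manifold. Let $\tilde Y\to Y$ denote the universal orbi-covering, which is a Riemannian manifold by hypothesis, with deck group $\Gamma:=\pi_1^{\mathrm{orb}}(Y)$. My first observation is that the local groups of $Y$ contain no reflections, since a reflection in a local group of $X$ gives rise to a codimension-one stratum, which by definition lies in $\partial X$ and hence outside $Y$. So the goal reduces to showing that $\Gamma$ acts freely on $\tilde Y$.

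The strategy is to transport information from the $G$-action on $M$ to the orbi-covering $\tilde Y\to Y$. I would set $M_Y:=\pi^{-1}(Y)$; since singular orbits project into $\partial X$ by the discussion preceding the lemma, $M_Y$ consists only of principal and exceptional orbits. Next I would form the pullback $\hat M_Y:=M_Y\times_Y\tilde Y$. Because $G$ is connected, the $G$-action lifts canonically and $G$-equivariantly to $\hat M_Y$, and the covering $\hat M_Y\to M_Y$ is classified by a subgroup of $\pi_1(M_Y)$. Since $M$ is simply connected, $\pi_1(M_Y)$ is generated by meridians around the codimension-one components of $\pi^{-1}(\partial X)$, which correspond to exceptional orbits whose slice representation contains a $\Z_2$-reflection. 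These meridians project to reflections in $\pi_1^{\mathrm{orb}}(X)$ that get trivialized in $\pi_1^{\mathrm{orb}}(Y)=\Gamma$, so their lifts to $\tilde Y$ act freely.

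The hard part will be turning the fact that these distinguished generators of $\pi_1(M_Y)$ act freely on $\tilde Y$ into the stronger statement that \emph{all} of $\Gamma$ does so. The delicate technical points are: accounting for codimension-one versus higher-codimension components of $\pi^{-1}(\partial X)$ separately; using the $G$-equivariance of the lifted action on $\hat M_Y$ to rule out fixed-point sets of any element of $\Gamma$ on $\tilde Y$; and finally concluding that $Y$ has no orbifold singularities at all, so that $Y$ is a manifold and $X$ is Coxeter by \lref{cox}.
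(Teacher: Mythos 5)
The paper does not prove this lemma at all: it is explicitly quoted from Lytchak, \emph{Geometric resolution of singular Riemannian foliations}, Theorem~1.8 (reference~\cite{Lyt}); the text right before the statement says ``we quote from \cite[Theorem~1.8]{Lyt}.'' So there is nothing in this paper to compare your argument against, and the ``in particular'' part is just an immediate consequence of the stated equivalence.

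That said, your sketch has genuine gaps. Your reduction is fine: by \lref{cox}, the forward direction is trivial, and the backward direction amounts to showing that $Y=X\setminus\partial X$ being good forces $Y$ to be a manifold. But the mechanism you propose does not work as written. The fiber product $\hat M_Y:=M_Y\times_Y\tilde Y$ is not a covering of $M_Y$ unless $Y$ is already a manifold: the map $\tilde Y\to Y$ is an orbi-covering, not a topological covering, and over a singular point $y\in Y$ with local group $\Gamma_y\neq 1$ the fiber is $\Gamma/\Gamma_{\tilde y}$ rather than $\Gamma$, so the cardinality of the fibers of $\hat M_Y\to M_Y$ jumps exactly at the points you are trying to show do not exist. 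To get a genuine covering you would need to pull back through the local orbifold charts (i.e.\ through the slices), which is considerably more delicate and is essentially the point of the theorem.

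Second, the description of $\pi_1(M_Y)$ as ``generated by meridians around the codimension-one components of $\pi^{-1}(\partial X)$'' is off. Around a codimension-one submanifold there is no meridian circle (the normal fibre is $1$-dimensional), and in a simply connected $M$ a closed two-sided hypersurface separates; the codimension-one exceptional strata are not closed (their closures contain singular orbits), the singular strata in $\pi^{-1}(\partial X)$ typically have codimension $\geq 2$ and \emph{do} contribute meridians, and the loops coming from the fibre direction $G$ (including ``reflection loops'' that travel through the disconnected component of $G_p$) also enter. Finally, the key assertion that the generators you single out ``get trivialized in $\pi_1^{\mathrm{orb}}(Y)$'' is exactly the content of the theorem and is not justified: note that the natural map runs $\pi_1^{\mathrm{orb}}(Y)\to\pi_1^{\mathrm{orb}}(X)$ (induced by inclusion), so a loop that is a reflection in $\pi_1^{\mathrm{orb}}(X)$ a priori survives, not dies, in $\pi_1^{\mathrm{orb}}(Y)$. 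You yourself flag the remaining step as ``the hard part,'' but as stated it conflates ``the generator is trivial in $\Gamma$'' with ``its lift acts freely'' — the former, combined with a correctly established surjection $\pi_1(M_Y)\twoheadrightarrow\Gamma$, would give $\Gamma=1$, which is what one wants; the latter is not a meaningful intermediate. In short, the plan has the right target ($Y$ must be a manifold) but the covering construction and the $\pi_1(M_Y)$ analysis both need to be replaced, and the central implication is left unproved.
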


For an orbifold $X$ consider the universal 
covering $\hat  X$ of 
the underlying topological space with the induced orbifold structure.  
Since being a Coxeter orbifold is a local condition, 
$X$ is a Coxeter orbifold if and only if  $\hat X$ is a Coxeter orbifold.
 The universal orbi-covering $\tilde X$ is also the universal orbi-covering of $\hat X$.
Hence investigating the question, whether a given Coxeter orbifold $X$ is a good orbifold we may restrict to the case, when the  underlying topological space $X$ is simply connected.

  For a Coxeter orbifold $X$,  the \emph{faces} are by definition the strata of 
codimension $1$.     The closure of a face is called a \emph{mirror}. 
Then each singular point of $X$ is contained in some mirror.  Given any   point $x\in X$, consider a small open convex ball
$U$ around $x$.    Since $U$ is diffeomorphic as an orbifold to a Coxeter chamber,
we see that $x$ is  contained in exactly $k$ mirrors  of the orbifold
$U$  if and only if $x$ lies on a stratum of codimension $k$.
  Note that a face of $U$ is part of a face of $X$, but that different faces of $U$ could be part of the same face of $X$.

It is  easy to characterize which Coxeter orbifolds  with simply connected underlying space are good, expressed in terms of the intersections of mirrors. 
One part of the following result is straightforward  
 (\cite{Davis}, Section 5.1); the other one is contained in 
\cite{AKLM}, Corollary 6.5.  (In \cite{AKLM},  the result is unfortunately incorrectly stated without the assumptions (C1), (C2) below. However, the assumptions are implicitly made
in Section 4 of \cite{AKLM}, where they are wrongly assumed to hold always.)   For convenience of the reader, we recall  the proof of this result:

\begin{lem}  \label{goodness}
Let $X$ be a Coxeter orbifold with $\pi_1 (X)=1$.   The orbifold $X$ is good if and only if it satisfies the following  conditions:

(C1) Any codimension two stratum is contained in two  \emph{different} mirrors. 

(C2)  If the intersection $\bar  W \cap \bar{ W'} $ of two different 
mirrors contains two different strata $B^{\pm}$  of codimension $2$, then   
the orders of the isotropy groups    at $B^+$ and at $B^-$  coincide.
\end{lem}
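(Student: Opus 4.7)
The plan is to encode $X$ by a Coxeter system and show that (C1) and (C2) are exactly the conditions under which the associated developing construction produces a manifold cover. First, I attach to $X$ a Coxeter group $W_X$: take one generator $s_W$ for every mirror $W$ of $X$, and for every codimension-$2$ stratum $B$ lying in the intersection $\bar W\cap \bar{W'}$ of two distinct mirrors, with local dihedral angle $\pi/m_B$, impose the relation $(s_W s_{W'})^{m_B}=1$ in addition to $s_W^2=1$. Using $\pi_1(X)=1$, a stratum-by-stratum Seifert--van Kampen argument (whose substance is \cite[Corollary 6.5]{AKLM}) shows that $\pi_1^{orb}(X)$ is isomorphic to the group presented by these generators and relations, provided one has a consistent set of relations — which is what (C1) and (C2) are about.

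For the \emph{if} direction, assume (C1) and (C2). Then $W_X$ is a well-defined Coxeter group: (C2) makes sure no pair $(\bar W,\bar{W'})$ carries two conflicting relations, and (C1) rules out relations arising from a mirror intersecting itself. The classical Davis--Vinberg construction of \cite[Section 5.1]{Davis} produces a space $\tilde X$ by gluing $|W_X|$ copies of $X$ indexed by $W_X$, with $wX$ and $ws_WX$ identified along the preimage of $\bar W$. I would verify local smoothness by a codimension induction: around a lift of a codim-$2$ stratum $B\subset\bar W\cap \bar{W'}$, condition (C2) guarantees that $s_Ws_{W'}$ has order exactly $m_B$ in $W_X$, so the $2m_B$ chambers meeting along that lift assemble into a Euclidean neighborhood; at higher-codimension strata the local group is finite Coxeter and its relations are already built into $W_X$. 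Connectedness and simple connectedness of $\tilde X$ then identify it with the universal orbi-cover, proving goodness.

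For the \emph{only if} direction, assume $X$ is good, so $\tilde X$ is a manifold on which $\Gamma = \pi_1^{orb}(X)$ acts with the preimage of each mirror being the fixed set of a reflection in $\Gamma$. If (C1) failed, a codim-$2$ stratum $B$ of $X$ lying in a single mirror $\bar W$ would lift to a codim-$2$ stratum of $\tilde X$ whose local group (dihedral of order $\geq 4$) is generated by the single reflection $\tilde s_W$, which is impossible in a manifold. For (C2), if $B^\pm\subset\bar W\cap\bar{W'}$ have local orders $m_\pm$, then at lifts of $B^\pm$ the element $\tilde s_W\tilde s_{W'}\in\Gamma$ acts as a rotation of orders $m_+$ and $m_-$ respectively; but a single element of the isometry group of a connected manifold has a single order, so $m_+=m_-$.

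The main obstacle is the presentation step hidden in the first paragraph: in the general setting of a Coxeter orbifold, showing that $\pi_1^{orb}(X)$ is really generated by mirror reflections with only the dihedral relations — with no hidden generators from loops around higher codimension strata and no hidden relations — requires a somewhat delicate inductive van Kampen argument through the stratification, which is where I would lean most heavily on the references already cited.
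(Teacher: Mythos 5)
Your proof is correct and follows essentially the same route as the paper's: both identify $\pi_1^{orb}(X)$ with the group presented by mirror generators $s_W$ and one dihedral relation per codimension-$2$ stratum, and both deduce goodness from the injectivity of the local-group maps $\Gamma_x\to\Gamma$, which under (C1)--(C2) amounts to the embedding of Coxeter parabolics. The only cosmetic difference is that for the ``if'' direction you phrase the injectivity via the explicit Davis basic construction of $\tilde X$, while the paper simply invokes the standard Coxeter-subsystem embedding theorem; these are two faces of the same fact, and both arguments rest on the same two references you cite.
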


\begin{proof}


The orbifold fundamental group $\Gamma $ of $X$ 
is generated by
involutions  $s_W$, indexed by the faces $W$ of $X$ (cf. \cite{Davis}).   The relations are generated
by words of the form $r_S$, indexed by strata $S$ of codimension $2$.  
For any such stratum $S$, consider a point $x\in S$ and  a small neighborhood $U$ of $x$.  Then the word $r_S$ has the form $r_S= (s_W \cdot s_{W'} ) ^n$,
where $n$ is the dihedral angle at  $x$ (half of the order of the local isotropy group $\Gamma _x$)   and where $W$ and $W'$ are the faces that extend the local faces in $U$  adjacent to $x$.

For any $x\in X$ the local group $\Gamma _x$ is the finite Coxeter group generated by the involutions indexed by the local faces at $x$. And the canonical map $\Gamma _x \to \Gamma$ sends the generator of $\Gamma _x$ corresponding to a local face to the generator of $\Gamma $ corresponding to the global face which extends the local one.

  The orbifold $X$ is good if and only if all maps $\Gamma _x \to \Gamma $ are injective.
If one of conditions~(C1) or~(C2) does not hold, then the map  from $\Gamma _x$ to $\Gamma$ is not injective, where $x$ is any point in a codimension $2$ stratum appearing in (C1), respectively, in the stratum among~$B^+$, $B^-$
at which the local isotropy group is larger in case~(C2).

On the other hand, if~(C1) and~(C2) hold true then $\Gamma$ is the 
Coxeter group with the Coxeter generating system $\langle s_W |r_S \rangle$, 
and any local group $\Gamma _x$ is given by a Coxeter subsystem.  
The statement that $\Gamma _x \to \Gamma$ is injective
is a standard statement about Coxeter groups.
\end{proof}

Let again $X$  be a Coxeter orbifold with $\pi_1 (X)=1$. Let $W$ be a face.  Consider $W$ with its intrinsic metric and   let $N$ be the completion. 
 Looking  at the situation locally, we see that $N$ is 
an Alexandrov space, which is in addition a  \emph{Riemannian manifold 
with corners} (cf. \cite{AKLM}). We have a canonical  map 
$i_W:N\to X$, which is an isometric embedding in a neighborhood of any point of $N$.
The image of $i_W$ is the  mirror $\bar W$.    The map $i_W$ is injective on $W$, but, 
a priori, it could happen,
that the map $i_W$ is not injective on $N$.

  The space $N$ considered as a manifold with corners has a canonical stratification, and 
any  stratum  of codimension $k$ is mapped by $i_W$ to a stratum of codimension $k+1$ in  $X$.  The closure of any stratum of $N$
is an extremal subset of $N$ in the sense of Alexandrov geometry.   We consider the following condition on the orbifold $X$:

(C3) For any face $W$,   any pair of  closures of  codimension $1$ strata in the completion $N$ of $W$ have a non-empty intersection in $N$.  

\begin{lem}
Let $X$ be a Coxeter orbifold. The condition (C3) implies  (C1) and (C2).
\end{lem}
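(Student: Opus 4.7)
My plan is to argue by contradiction on each of (C1) and (C2), exploiting the local Coxeter structure at an intersection point produced by (C3).

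For (C1), suppose some codimension $2$ stratum $S$ of $X$ is contained only in the mirror $\bar W$. Then at any $x\in S$ the two local faces of the local Coxeter chamber both extend globally to $W$, so the two local sheets of $W$ near $x$ lie in different components of $W\cap U$ for a small neighbourhood $U$ of $x$ and are separated by a positive distance in the intrinsic metric of $W$. Consequently, they give rise in the completion $N$ to two distinct codimension $1$ strata $T_1\ne T_2$, both mapping onto $S$ under $i_W$. By (C3), there is a point $p\in\bar T_1\cap\bar T_2$; let $q=i_W(p)$. Inside the local Coxeter chamber $D_q$, the strata $T_1,T_2$ at $p$ correspond to two distinct codimension $2$ faces $F\cap F'_1,\ F\cap F'_2$ of $D_q$, where $F$ is the local sheet of $W$ through $p$ and $F'_1\ne F'_2$ are adjacent local faces; both of these codimension $2$ strata lie in $S$. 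Since $S\subset\bar W$ only, this forces $F'_1,F'_2$ also to lie in $W$, producing at least three local sheets of $W$ meeting at $q$. The plan is then to iterate at the new sheets: each application of (C3) supplies fresh intersection points whose local analysis yields yet more sheets of $W$ inside the finite Coxeter chamber $D_q$, eventually exceeding the number of reflections available in $\Gamma_q$, a contradiction.

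For (C2), with (C1) now established, each of $B^{+}$, $B^{-}$ is the image of a single codimension $1$ stratum $T^{+}, T^{-}\subset N$. By (C3) pick $p\in\bar T^{+}\cap\bar T^{-}$ and $q=i_W(p)$. In the local chamber $D_q$ one identifies the local sheet $F\subset W$ through $p$ together with two distinct local faces $F'_{\pm}\subset W'$ such that $F\cap F'_{\pm}\subset B^{\pm}$. The pairs of reflections $(s_F,s_{F'_{\pm}})$ generate dihedral subgroups of $\Gamma_q$ of orders $2n^{\pm}$. Using the canonical map $\Gamma_q\to\pi_1^{orb}(X)$, together with the fact that any local reflection in a sheet of $W$ (respectively $W'$) projects to the single global involution $s_W$ (respectively $s_{W'}$), one concludes that the two local dihedral angles at $F\cap F'_{\pm}$ are forced to coincide, so $n^{+}=n^{-}$, contradicting the assumption.

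The main obstacle is the careful translation of two codimension $1$ strata of $N$ meeting at $p$ into two codimension $2$ faces of the local Coxeter chamber $D_q$ sharing the sheet of $W$ through $p$, and the extraction from this of the combinatorial constraints on the global mirrors. The iterative step in the (C1) argument is the most delicate: its success rests on verifying that each newly produced local sheet of $W$ is genuinely distinct from all previously produced ones, which in turn follows from the distinctness of the underlying codimension $1$ strata of $N$.
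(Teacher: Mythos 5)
Your proof proposal departs from the paper's argument in both halves, and in both halves there is a genuine gap.

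For (C1), the paper's argument is short and local: once (C3) produces a point $z\in\bar A^+\cap\bar A^-$, the two distinct codimension~$1$ strata of $N$ that both map into $A$ force the restriction of $i_W$ to every neighborhood of $z$ to be non-injective, which contradicts the fact that $i_W$ is an isometric embedding near every point of $N$. You instead try to accumulate local sheets of $W$ at the point $q=i_W(p)$ and ``iterate'' until the finite Coxeter group $\Gamma_q$ runs out of reflections. You yourself flag this iterative step as the delicate one, and indeed it is not carried out: after producing three local sheets $F,F'_1,F'_2$ at a single $q$, it is unclear which new pair of codimension~$1$ strata of $N$ you would feed back into (C3), why the resulting intersection point could be taken to be the \emph{same} $q$ (rather than wandering off to a new point where the count restarts), and why the newly produced sheets are all distinct from the old ones. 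As written this is a plan, not a proof.

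For (C2) the gap is more serious: the argument is circular. You fix the intersection point $p$ via (C3), set $q=i_W(p)$, and then invoke the canonical map $\Gamma_q\to\pi_1^{orb}(X)$ together with the fact that all local reflections in sheets of $W$ (resp.\ $W'$) are sent to the single generator $s_W$ (resp.\ $s_{W'}$), to ``conclude'' $n^+=n^-$. But nothing forces those two dihedral orders to agree just because $s_{F'_+}$ and $s_{F'_-}$ have the same image: if $n^+\neq n^-$, the map $\Gamma_q\to\pi_1^{orb}(X)$ is simply not injective, which is not a contradiction at this stage. The injectivity of the local-to-global maps is exactly what is at stake in the criterion of \lref{goodness}, so appealing to $\pi_1^{orb}(X)$ here presupposes what one is ultimately trying to establish. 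The paper avoids this entirely by a purely local geometric observation: it shows first that $\bar B^+\cap\bar B^-$ must be \emph{empty}, since a common point would exhibit two local mirrors of a finite Coxeter chamber intersecting in two different codimension~$2$ strata, which is impossible; it then derives a contradiction because (C3) applied to the preimages $A^\pm$ in $N$ forces $i_W(\bar A^+\cap\bar A^-)\subset \bar B^+\cap\bar B^-$ to be non-empty. You would do well to adopt this route, which uses no global orbifold-group data at all.
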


\begin{proof}
Assume that (C1) does not hold. Consider a bad stratum $A$ of codimension $2$, an adjacent face $W$ and the completion $N$.    Then we have
two different strata $A^{\pm}$  of codimension $1$ in $N$ that are mapped by
$i_W $ to the same stratum $A$.    By assumption $\bar A^+ \cap  \bar A^-$ is not empty.  At any point $z$ in this intersection, the restriction of $i_W$ to any neighborhood  of $z$ is non-injective, which is impossible. 

Assume that (C2) does not hold and consider $W$, $W'$, $B^{\pm}$ as in the 
definition.
We claim that $\bar B^+ \cap \bar B^-$ is empty.  Otherwise,  
in the neighborhood 
of any point $x$ in the intersection, we would see two local mirrors which intersect in different local strata of codimension $2$.  But this does not happen in a Coxeter chamber.

On the other hand, consider the completion $N$ of $W$ and the two strata $A^{\pm}$ in $N$ that are mapped by $i_W$ to $B^{\pm} $ respectively.   Choose a point  $z$ in the non-empty intersection $\bar A^+ \cap  \bar A^-$, which must exist by assumption (C3).
Then the point    $x=i_W (z)$ is contained in $\bar B ^+ \cap \bar B^-$, in contradiction to the previous argument.
\end{proof}

\subsection{Positively curved Coxeter orbifolds}
Assume now that a
compact Coxeter orbifold $X$ is positively curved.   
The completion $N$ of any face $W$ is positively curved as well.
From the theorem of Petrunin-Frankel
(\cite{Petrunin}, Corollary 3.3) we deduce that any pair of strata of codimension $1$ in $N$   has a non-empty intersection, if the dimension of 
$N$ is at least $2$ and the curvature greater than some positive constant.  Thus 
$X$ satisfies the condition  (C3), if $\dim (X) \geq 3$.  
We can also apply this reasoning to 
the universal covering $\hat X$ of $X$ (since it has curvature bounded   
below by some positive constant, so it is a
compact Coxeter orbifold as well). We arrive at:

\begin{lem}  \label{pet}
Let $X$ be a compact positively curved Coxeter orbifold
of dimension at least $3$. Then $X$ is a good orbifold.
\end{lem}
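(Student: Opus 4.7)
The plan is to reduce to the simply connected case and then verify condition (C3) using a Petrunin--Frankel-type intersection theorem, quoting \lref{goodness} and the preceding lemma to conclude.

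First I would replace $X$ by the universal cover $\hat X$ of its underlying topological space, equipped with the induced orbifold structure. Since being a Coxeter orbifold is a local condition and the universal orbi-covering of $\hat X$ agrees with that of $X$, the orbifold $X$ is good if and only if $\hat X$ is good. The metric on $\hat X$ is locally isometric to that on $X$, so $\hat X$ is still a positively curved Riemannian orbifold of dimension at least~$3$; by Myers' theorem applied to the orbi-orthonormal-frame bundle, $\pi_1(X)$ is finite, hence $\hat X$ is compact and the curvature is bounded below by some positive constant. Thus we may assume $\pi_1(X)=1$.

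Next I would verify condition (C3) for $X$. Fix a face $W$ and let $N$ denote the completion of $W$ with its intrinsic metric. As observed in the paragraph preceding \lref{cox}, $N$ is a Riemannian manifold with corners and, in particular, a compact Alexandrov space of positive curvature bounded below. Since $\dim X\geq 3$, we have $\dim N \geq 2$. The closures of two codimension-$1$ strata in $N$ are extremal subsets of $N$, each of codimension~$1$. By the Petrunin--Frankel theorem (\cite[Corollary~3.3]{Petrunin}), two extremal subsets whose codimensions sum to at most $\dim N$ in a compact positively curved Alexandrov space must meet; hence any two such closures intersect non-trivially. This establishes (C3).

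Finally, I would invoke the lemma stating that (C3) implies (C1) and (C2), and then \lref{goodness}, which together yield that $X$ is a good orbifold. The main obstacle in this outline is really bookkeeping: making sure that the positive lower curvature bound survives passage to $\hat X$ and descends to the completion of a face, and that the form of Frankel--Petrunin quoted applies to the extremal subsets obtained as closures of codimension-$1$ strata of $N$ rather than to smooth totally geodesic submanifolds. Both are standard in Alexandrov geometry, so no genuinely new ingredient is needed beyond what the paper has already developed.
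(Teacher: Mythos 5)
Your proposal is correct and follows essentially the same route as the paper: pass to the universal cover $\hat X$ of the underlying space to ensure $\pi_1=1$, apply the Petrunin--Frankel theorem (\cite[Corollary~3.3]{Petrunin}) to the completion $N$ of each face to verify condition~(C3), and then invoke the lemma that (C3) implies (C1) and (C2) together with \lref{goodness}. You spell out a few points the paper leaves implicit (compactness of $\hat X$ via Myers, the codimension count), but the argument and the supporting lemmas are the same.
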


In the course of the proof of  \tref{good}, we will be needing 
the following result whose proof is contained in the first few lines of the 
proof of Proposition~2.3 in~\cite{fgt}:

\begin{lem}  \label{fgt-lemma}
Let $M$ be a simply connected compact positively curved manifold. 
Let $g:M\to M$ be a reflection at a totally geodesic hypersurface $N$.
Then $M$ and $N$ are diffeomorphic to spheres.
\end{lem}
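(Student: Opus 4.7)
The strategy is to show that the ``positive side'' of the reflection is a smooth disc; doubling then gives that $M$ is a sphere and $N$ its equator. Since $M$ is simply connected and $N$ is a two-sided embedded hypersurface (being the fixed set of the isometric involution $g$), the complement $M\setminus N$ has exactly two connected components, exchanged by $g$. Let $M^+$ denote the closure of one of them. Then $M^+$ is a compact Riemannian manifold with totally geodesic boundary $N$, positive sectional curvature, and $M$ is isometric to the Riemannian double of $M^+$ along $N$.

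I would then analyze the distance function $f:=d(\cdot ,N)$ on $M^+$. Because $N$ is totally geodesic and the sectional curvature of $M$ is bounded below by a positive constant (by compactness), the second variation formula, applied to Jacobi fields along unit-speed normal geodesics leaving $N$, shows that $f$ is strictly concave along every geodesic on which it is smooth. In particular, the maximum of $f$ on $M^+$ is attained at a unique point $p^+$: two distinct maxima would force the midpoint of a minimizing geodesic between them to have a strictly larger value of $f$ by strict concavity, contradicting maximality.

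Having identified a unique ``soul point'' $p^+\in M^+\setminus N$, I would invoke the standard machinery for strictly concave exhaustion functions (in the spirit of Cheeger--Gromoll and Perelman) to conclude that the gradient flow of a smoothing of $-f$ exhibits the sub-level sets $\{f\le c\}$ as a smooth one-parameter family of hypersurfaces, each diffeomorphic to $N$, degenerating at $c=f(p^+)$ to the single point $p^+$. Equivalently, the normal exponential map $\exp^{\perp}\colon \nu(N)\to M^+$ restricts to a diffeomorphism from the closed disc bundle of radius $f(p^+)$ onto $M^+$ away from $p^+$, with all length-$f(p^+)$ normal geodesics focusing at $p^+$. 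This gives a diffeomorphism $M^+\cong D^n$ with $\partial M^+=N\cong S^{n-1}$.

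Finally, since $N$ is totally geodesic, the Riemannian double of $M^+$ along $N$ reproduces $M$ as a smooth manifold; and the smooth double of $D^n$ along $\partial D^n=S^{n-1}$ is $S^n$. Hence $M\cong S^n$ and $N\cong S^{n-1}$, as claimed. The main obstacle is the rigorous passage from strict concavity of $f$ to the smooth conclusion $M^+\cong D^n$: while the level-set picture is intuitively clear, its implementation requires the gradient-flow technology from the Soul Theorem circle of ideas, precisely because $f$ itself need not be $C^2$ across the cut locus of $N$. This is the technical kernel of the statement and is what the reference \cite{fgt} presumably handles in its opening lines.
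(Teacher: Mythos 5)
Your proposal is correct and follows the same route as the proof the paper cites (the opening lines of Proposition~2.3 in~\cite{fgt}): cut $M$ along $N$ into a compact positively curved piece $M^+$ with totally geodesic boundary, observe that $d(\cdot,N)$ is strictly concave so that it has a unique soul point, and then invoke the Cheeger--Gromoll/Sharafutdinov-type gradient-flow technology to identify $M^+$ smoothly with a disc and hence the double $M$ with a sphere; the paper even runs this exact distance-function-concavity argument itself in the proof of Theorem~\ref{good}. One small caution: your intermediate claim that the normal exponential map of $N$ is a diffeomorphism onto $M^+\setminus\{p^+\}$ with all normal geodesics focusing at $p^+$ is stronger than needed and not immediate (the cut locus of $N$ need not a priori be a single point); the level-set/gradient-flow argument you correctly identify as the technical kernel is what actually carries the conclusion, not this normal-exponential picture.
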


Now we can prove  \tref{good}:

\begin{proof} [Proof of \tref{good}]
Let $\tilde X$ denote the universal orbi-covering of $X$. By assumption,
$\tilde X$ has uniformly positive curvature, in particular it is compact.
There is at least one reflection~$g$ on $\tilde X$. 
We may assume that $g$ is the reflection at a totally geodesic
hypersurface $Z$, dividing $\tilde X$ in two isometric connected components.  
Without loss of generality, we may assume  
$X=\tilde X/\{ 1,g\} $. Then $X$ has boundary $\partial X=Z$
(as an Alexandrov space) and the orbifold $X_0:=X\setminus\partial X$ 
has trivial orbifold fundamental group (for instance,  
cf.~\cite[Prop.~3.4]{GL}).

We claim first that $X_0$ is a manifold.   
We refer to \cite[pp.5-23]{KleinerLott}  
for details of the following argument. 
By positivity of curvature, the distance function $f=d_Z$ is 
strictly concave on 
$X_0$.  It has a unique critical point $p$ 
(in the sense of Alexandrov geometry, which is 
equivalent to being critical in the orbifold sense).  
Moreover,  $X_0 \setminus \{p\}$
is diffeomorphic  (as an orbifold) to  the product $(0,t) \times T$, 
where $T$ is a level set of $f$.  This implies 
that $\pi_1 ^{orb} (X_0) = \pi_1 ^{orb} (X_0 \setminus \{ p\})=
\pi_1 ^{orb} (U_p)$, for any small neighborhood $U_p$ of $p$. 
Since $\pi_1^{orb}(X_0)=1$, this shows that
 $p$ is a regular point of the orbifold $X_0$.  Since the whole 
orbifold $X_0$ is diffeomorphic to an open suborbifold of $U_p$, we 
deduce that $X_0$ is a manifold.

Due to \lref{cox}, $X$ a Coxeter  orbifold. Due to \lref{pet}
the orbifold $X$ is a good orbifold.  Hence,  $\tilde X$ is a manifold. 

Now, $\tilde X$ is a positively curved manifold with an isometric  
reflection at a hypersurface. By \lref{fgt-lemma} we deduce that  
$\tilde X$ must be diffeomorphic to a sphere.
\end{proof}

\section{Orbifolds isometric to quotients of spheres} \label{secorbiquot}

\subsection{Reductions and immersions of strata} \label{strata}

We consider a more general construction than that in 
subsection~\ref{reduction}. 
Let a compact Lie group $G$ act by isometries on a complete connected 
Riemannian  manifold $M$. Denote the quotient space $M/G$ by~$X$. 
Let $H\subset G$ be the isotropy group of an arbitrary point in $M$. 
Denote the normalizer of $H$ in $G$ by $N$.  
Consider the set $F'$ of fixed points of $H$, and let $F^0 \subset F'$ be 
the set of all points whose isotropy group is exactly $H$.  
Then $F'$ is totally geodesic in $M$, $F^0$ is open in 
$F'$ and the closure $F$ of $F^0$ is a union of some  components of $F'$. 
The sets $F$ and $F^0$ are $N$-invariant.  The embedding 
$F\to M$ defines a canonical map $I_H:F/N\to M/G$.
For a general, non-principal isotropy group $H$,  
the map $I$ needs not be injective, as the following typical example shows
(the version of this map in the category of algebraic actions
of complex reductive groups on affine varieties
is a normalization map, cf.~\cite[\S2]{Lu4}).

\begin{ex} \label{example}
Consider the representation of $G=\U 2$ on $\C^2 \oplus \R ^3$, 
where $G$ acts on $\R^3$ as $\SO3$ 
(Case 12 in Table 3 below; see also \cite[Table~II, type~I]{str}).
  Let $M$ be the unit sphere
with the induced action of $G$. The quotient space $X=M/G$ is 
half of a tear-drop. It is topologically a disc, which has three 
isotropy strata, all of them connected: the open disc, 
the complement of one point on the boundary circle and this one point.
The $1$-dimensional stratum corresponds to the isotropy group $H=\U 1$.  The
corresponding normalizer $N$ is the maximal torus $\U 1\times \U 1$. 
The set $F$ of fixed points is the unit sphere $\mathbb S^2$ in 
$\C \times \R$  and the quotient space $F/N$ is the interval of length $\pi$.  
The map $I$ sends this interval length-preserving onto the boundary of 
the disc and folds both end-points to the  most 
singular point of $X$.
\end{ex}

In general we have:

\begin{lem} \label{immersion}
Under the assumptions above, the map $I_H : F/N \to M/G$ is $1$-Lipschitz, 
finite-to-one
and preserves the lengths of curves. The restriction of
$I_H$ to $F^0/N$ is an injective local isometry 
onto the $H$-isotropy 
stratum of $M/G$.  Moreover, $I_H$ preserves the quotient-geodesic flow, i.e., it sends a projection of a horizontal geodesic 
to the projection of a horizontal geodesic.
\end{lem}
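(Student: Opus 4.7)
\medskip

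\noindent\emph{Proof plan for \lref{immersion}.} The map $I_H$ is well-defined because $N\subset G$: distinct $N$-orbits in $F$ may still lie in a common $G$-orbit, but any $N$-orbit is contained in a unique $G$-orbit. The whole proof rests on two geometric facts that I would establish first. (i)~The set $F'=M^H$ is totally geodesic in $M$, being the fixed point set of an isometric action, and $F$ is a union of connected components of $F'$; hence the inclusion $F\hookrightarrow M$ is a local isometry and the intrinsic length of any curve in $F$ equals its length in $M$. (ii)~At any $p\in F^0$, the decomposition $T_pM=T_p(G\cdot p)\oplus\nu_p(G\cdot p)$ is $H$-equivariant, and
$$T_pF=(T_pM)^H=(T_p(G\cdot p))^H\oplus(\nu_p(G\cdot p))^H.$$
Since $G_p=H$, the component of $(G\cdot p)^H$ through $p$ equals $N\cdot p$ (the standard observation $\{g:g^{-1}Hg\subset H\}=N_G(H)$ for compact $H$), so $(T_p(G\cdot p))^H=T_p(N\cdot p)$. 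Consequently, a tangent vector in $T_pF$ that is $N$-horizontal in $F$ lies in $(T_pM)^H$ and is orthogonal to $T_p(N\cdot p)$; since the $H$-moving part of $T_p(G\cdot p)$ is orthogonal to the $H$-fixed subspace of $T_pM$, such a vector is automatically $G$-horizontal in $M$. In short, \emph{$N$-horizontal vectors in $F$ are $G$-horizontal in $M$}.

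From these two ingredients, the $1$-Lipschitz property drops out immediately: for $p,q\in F$,
$$d_{F/N}(Np,Nq)=\inf_{n\in N}d_F(p,nq)\geq\inf_{n\in N}d_M(p,nq)\geq\inf_{g\in G}d_M(p,gq)=d_{M/G}(Gp,Gq),$$
the first inequality because intrinsic distance in the totally geodesic $F$ dominates the ambient distance in $M$, and the second because $N\subset G$. For the length-preservation claim, lift an arbitrary curve $\gamma$ in $F/N$ to an $N$-horizontal curve $\tilde\gamma$ in $F$; by (i), $L(\tilde\gamma)=L(\gamma)$ and $\tilde\gamma$ has the same length regarded as a curve in $M$. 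At almost every $t$, $\tilde\gamma(t)\in F^0$ (since $F^0$ is open and dense in $F$ by definition), and by (ii) the velocity $\tilde\gamma'(t)$ is $G$-horizontal at such $t$. Hence the projected curve $I_H\circ\gamma=\pi_G\circ\tilde\gamma$ in $M/G$ has length equal to $L(\tilde\gamma)=L(\gamma)$.

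For finiteness of the fibres, a point in $I_H^{-1}(Gp)$ is represented by some $q\in F\cap Gp$, and $F\cap Gp$ is the intersection of $Gp$ with a union of components of $M^H$. Writing $Gp\cong G/G_p$, the set $(G/G_p)^H$ is a compact smooth submanifold of $Gp$ with finitely many connected components (for compact $H$), and the normalizer $N=N_G(H)$ acts on this set with each orbit covering a union of components; hence $I_H^{-1}(Gp)$ is finite. The injectivity of $I_H$ on $F^0/N$ is the classical orbit-reduction argument: if $p,q\in F^0$ and $q=gp$ for some $g\in G$, then $H=G_q=gG_pg^{-1}=gHg^{-1}$, so $g\in N$. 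Together with the step in (ii) above, this shows that $I_H|_{F^0/N}$ is an injective local isometry onto the $H$-isotropy stratum of $M/G$. Finally, preservation of the quotient-geodesic flow: an $N$-horizontal geodesic $\tilde\gamma$ in $F$ is an honest geodesic in $M$ by total geodesicity, and by (ii) it is $G$-horizontal at every $t$ with $\tilde\gamma(t)\in F^0$; by continuity of the velocity and the closedness of the horizontality condition, it is $G$-horizontal throughout, so $\pi_G\circ\tilde\gamma$ is a quotient-geodesic in $M/G$.

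The main obstacle I anticipate is the behaviour along the exceptional set $F\setminus F^0$, where the isotropy jumps above $H$ and the identification $(T_p(G\cdot p))^H=T_p(N\cdot p)$ used in (ii) may fail. My approach is to derive all statements first on $F^0/N$, where the argument is clean, and then extend to $F/N$ by density of $F^0$ in $F$ (for the metric statements) and by closedness of the horizontal distribution and continuity of the geodesic flow (for the quotient-geodesic statement), so that $F\setminus F^0$ contributes nothing new.
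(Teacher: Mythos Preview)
Your proposal is essentially correct and follows the paper's strategy: the core observation is your (ii), that $N$-horizontal directions in $F$ are $G$-horizontal in $M$ (what the paper phrases as ``looking at the slice representation of $H$''), from which geodesic preservation and then everything else flows. Your injectivity argument on $F^0/N$ is identical to the paper's, and the paper, like you, reduces to $F^0$ by density. You spell out considerably more than the paper does (the explicit tangent decomposition, the direct $1$-Lipschitz chain, the finiteness via components of $(G\cdot p)^H$), whereas the paper simply asserts that ``the remaining claims follow directly'' from geodesic preservation.

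There is one genuine gap in your length-preservation step: the claim ``at almost every $t$, $\tilde\gamma(t)\in F^0$ since $F^0$ is open and dense'' is false in general --- a curve can lie entirely in $F\setminus F^0$. Density alone gives you nothing here. You have two clean fixes. First, the paper's route: derive length preservation from geodesic preservation, where the density argument \emph{does} work because a horizontal geodesic is determined by its initial data and horizontality propagates along geodesics. Second, and more in your spirit: your identity $(T_p(G\cdot p))^H = T_p(N\cdot p)$ actually holds at \emph{every} $p\in F$, not only on $F^0$. Indeed, taking an $H$-invariant complement $\mathfrak m$ to $\mathfrak g_p$ in $\mathfrak g$, one has $(\mathfrak g/\mathfrak g_p)^H \cong \mathfrak m^H \subset \mathfrak z_{\mathfrak g}(H)\subset\mathfrak n$, so the $H$-fixed part of the orbital tangent space is exactly $T_p(N\cdot p)$ even when $G_p\supsetneq H$. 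With this, your (ii) holds on all of $F$, and the ``a.e.\ in $F^0$'' issue evaporates. This same observation also makes your finiteness argument airtight: it shows each $N$-orbit is open in $(G\cdot p)^H$, hence a union of components.
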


\begin{proof}
The image $I_H (F^0 /N)$  is the set of all $G$-orbits that contain a point with isotropy group $H$, hence exactly the $H$-isotropy stratum of $M/G$.  If two points $p_1$, $p_2 \in F_0$ are in the same $G$-orbit then any  element   $g\in G$ sending $p_1$ to $p_2$ must normalize $H$. Hence, $p_1$ and $p_2$ are in the same $N$-orbit.  Thus the map $I_H$ is injective on $F_0/N$.

The remaining claims follow directly, once we know that $I_H$ preserves the 
quotient-geodesic flow. Since $F_0/N$ is dense in $F/N$ it suffices
to see that an $N$-horizontal geodesic in $F^0$ is a $G$-horizontal geodesic in $M$.  But this becomes clear by looking at the slice  representation (of $H$) at any point of $F^0$.
\end{proof}

\subsection{The orbifold case}
In case that the quotient space is an orbifold we deduce:
\begin{cor}
Let a compact Lie group $G$ act by isometries on a complete 
Riemannian manifold $M$. Assume that $X=M/G$ is a Riemannian orbifold. 
Let $H$ be any isotropy group of the action, with normalizer $N$ and $F$ 
defined as above.  Then $F/N$ is a Riemannian orbifold.  
The map $I_H:F/N\to M/G$ is a totally geodesic isometric immersion
of orbifolds.
\end{cor}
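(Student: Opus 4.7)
The plan is to split the claim in two: first establish that $F/N$ is itself a Riemannian orbifold, and then leverage \lref{immersion} to promote the properties listed there to the stated totally geodesic isometric immersion of orbifolds.

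Being a Riemannian orbifold is a local property; by the characterisation recalled from~\cite{LT}, it amounts to showing that every slice representation of the induced $N$-action on $F$ is polar. I would fix $p\in F$, set $K=G_p$, and note that $H\subseteq K$ and $N_p=N\cap K=N_K(H)$. Because $F$ consists of components of the fixed-point set $M^H$ and is totally geodesic in $M$, we have $T_pF=(T_pM)^H$. The slice representation of $N$ at $p$ is therefore obtained by restricting the slice representation $K\to\mathbf{O}(T_p^{\perp}(Gp))$ of $G$ at $p$---which is polar by the orbifold hypothesis on $M/G$---to the $H$-fixed subspace, with $N_K(H)/H$ acting. The hard part will be verifying that this restricted action remains polar. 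This is a Luna--Richardson-type property of polar representations: a section $\Sigma$ for the $K$-action can be conjugated inside $(T_pM)^H$, and since $\Sigma$ meets every $K$-orbit orthogonally, it automatically meets every $N_K(H)/H$-orbit on $(T_pM)^H$ orthogonally. Granting this, all slice representations of $N$ on $F$ are polar, so $F/N$ is a Riemannian orbifold.

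Once $F/N$ is known to be a Riemannian orbifold, the remaining content is nearly immediate from \lref{immersion}: that lemma gives that $I_H$ is $1$-Lipschitz, length-preserving, and an injective local isometry on the dense open subset $F^0/N$. Around an arbitrary point $\bar p\in F/N$ the orbifold chart of $F/N$ is modeled on the slice $N_p\to\mathbf{O}(T_p^{\perp}(Np))$, while the chart of $M/G$ at $I_H(\bar p)$ is modeled on $K\to\mathbf{O}(T_p^{\perp}(Gp))$; the natural inclusion $T_p^{\perp}(Np)\hookrightarrow T_p^{\perp}(Gp)$ is isometric, and the previous paragraph shows that the induced map on quotients is locally isometric. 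Hence $I_H$ extends to a local isometry of orbifolds on all of $F/N$, i.e., an isometric immersion.

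For the totally geodesic property, I would invoke the last assertion of \lref{immersion}: $I_H$ carries projections of $N$-horizontal geodesics to projections of $G$-horizontal geodesics, so it intertwines the two quotient-geodesic flows. Since on a Riemannian orbifold the quotient-geodesics coincide with the orbifold-geodesics (Subsection~\ref{quot-geod}), $I_H$ maps orbifold-geodesics of $F/N$ to orbifold-geodesics of $M/G$, which is exactly the total geodesicity of the immersion.
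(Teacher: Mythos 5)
Your proposal splits the claim correctly, and the second half (passing from \lref{immersion} to a totally geodesic isometric immersion via the coincidence of quotient-geodesic flow and orbifold-geodesic flow) matches the paper's argument. However, for the first half — showing $F/N$ is a Riemannian orbifold — you take a genuinely different route from the paper, and the route has an acknowledged gap that is in fact the entire content of the claim.

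You aim to verify directly that every slice representation of the $N$-action on $F$ is polar, reducing this to the assertion that if $(K,W)$ is a polar representation and $H$ is an isotropy group of it, then the induced action of $N_K(H)/H$ on $W^H$ (or on the appropriate normal slice inside $W^H$) is again polar. Your justification — ``a section $\Sigma$ for the $K$-action can be conjugated inside $(T_pM)^H$, and since $\Sigma$ meets every $K$-orbit orthogonally, it automatically meets every $N_K(H)/H$-orbit orthogonally'' — is where things break down. That a section can be chosen inside $W^H$ is clear when $H$ is the \emph{principal} isotropy group (then $H$ fixes the unique section through a regular point pointwise), but for an arbitrary isotropy group $H$ the sections through a point fixed by $H$ need only be permuted by $H$, not fixed, and there is no a priori reason a section should lie in $W^H$. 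Moreover, even granting a section inside $W^H$, meeting $K$-orbits orthogonally in $W$ does not obviously translate into meeting $N_K(H)/H$-orbits orthogonally in $W^H$, since the $N_K(H)$-orbit through a point is in general a proper subset of the $H$-fixed part of the $K$-orbit. You flag this yourself (``The hard part will be verifying that this restricted action remains polar... Granting this...''), but this is not a technicality to be granted: it is precisely what needs to be proved, and your sketch of it does not close.

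The paper sidesteps this representation-theoretic question entirely. It observes that $I_H$ restricted to $F^0/N$ is a Riemannian isometry onto the $H$-isotropy stratum of $M/G$, and a stratum of a Riemannian orbifold is a Riemannian manifold, hence has locally bounded sectional curvature. Transporting this bound back through the isometry, the principal part of $F^0/N$ has uniformly bounded curvature, and the curvature characterization from~\cite{LT} (quotient is a Riemannian orbifold iff curvatures stay bounded, equivalently iff the action is infinitesimally polar) then gives that $F/N$ is a Riemannian orbifold. This is both shorter and avoids the unproved (and possibly delicate) polar-reduction assertion your argument rests on. If you want to pursue your route, you would need to supply a genuine proof that Luna--Richardson-type restrictions of polar representations to fixed-point sets of non-principal isotropy groups remain polar, which is a substantive lemma in its own right.
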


\begin{proof}
The subset $F^0 /N$ is open and dense in $F/N$.  
On the other hand, $I_H$ is a Riemannian isometry 
from $F^0 /N$ onto its image.
This image is a stratum in a Riemannian orbifold.
We deduce that the sectional curvatures in the principal part of 
$F^0/N$  are uniformly bounded from both sides. Hence $F/N$ 
is a Riemannian orbifold by~\cite{LT}. 

The second claim follows from the first claim, \lref{immersion}   
and the fact that the quotient-geodesic flows in  $X$ and  in $F/N$ coincide  with the respective orbifold-geodesic flows (Subsection~\ref{quot-geod}).
\end{proof}

\subsection{Spherical case}
Now we state the application of the strata immersion map $I_H$ which will be used in the  inductive proof of our main step:

\begin{cor}  \label{induction}
Let the compact Lie 
group $G$ act isometrically on the round sphere $\mathbb S^n$.
Assume that the action is non-polar and that the quotient $X=\mathbb S^n /G$
is a Riemannian orbifold  of dimension $k\geq 3$ with a  non-empty boundary. 
Then there exists a non-polar  isometric  action of a 
compact Lie group $N$ on  some unit sphere $\mathbb S ^m$, 
such that the quotient $Y=\mathbb S^m /N$ is a good Riemannian orbifold
of dimension~$k-1$.
Moreover, there is a isometric reflection $r$ on the universal 
orbi-covering $\tilde X$ of $X$, whose set $Z$ of fixed points is the universal orbi-covering    of $Y$. In  particular, the universal orbi-covering of 
$Y$ is diffeomorphic to a sphere.
%
%
%
\end{cor}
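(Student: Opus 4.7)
The plan is to apply the strata reduction from the preceding subsection to a codimension-one stratum of $X$, producing the quotient orbifold $Y$, and then to identify its universal orbi-cover $\tilde Y$ with the fixed hypersurface of a reflection on $\tilde X$ using \tref{good} and \lref{fgt-lemma}. Concretely, since $X$ has non-empty boundary, it has a codimension-one stratum $W$; let $H$ be the isotropy of $G$ at a point of $\bS^n$ projecting to $W$, set $N=N_G(H)$, and let $F$ be the union of connected components of the $H$-fixed set in $\bS^n$ meeting the set $F^0$ of points with isotropy exactly $H$. Passing to a single component, $F=\bS^m$ is a round subsphere, and the preceding corollary yields that $Y:=F/N$ is a Riemannian orbifold of dimension $k-1$ and that $I_H:Y\to X$ is a totally geodesic isometric immersion with image $\bar W$.

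Assuming for contradiction that the $N$-action on $F$ is polar, the quotient $Y$ has constant curvature one, and so does $\bar W$ by local isometry. At a point $q\in F^0$, the slice representation of $H$ on $V_q$ is polar and decomposes as $V_q^H\oplus V_q^{H\perp}$, with $H$ acting trivially on $V_q^H$ and with half-line quotient on $V_q^{H\perp}$ (this being the feature that makes $W$ a codimension-one stratum). A careful combination of a polar section for $Y$ with the transverse slice data along $\bar W$ then assembles a polar structure for $G$ on a neighborhood of $\bar W$; real-analyticity of the $G$-action together with connectedness of the regular locus of $X$ propagates this globally, contradicting the hypothesis that $G$ is non-polar.

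For the identification, \tref{good} applied to $X$ (non-empty boundary, dimension $\ge 3$) gives that $\tilde X$ is diffeomorphic to a sphere. The face $\bar W$ corresponds to a conjugacy class of reflections in $\pi_1^{orb}(X)$; fix a representative $r$ and let $Z\subset\tilde X$ be its fixed hypersurface. By \lref{fgt-lemma}, $Z$ is likewise diffeomorphic to a sphere, hence simply connected, and so $Z\to\bar W$ is the universal orbi-covering of $\bar W$. Composing the universal orbi-covering $\tilde Y\to Y$ with $I_H$ yields a locally isometric map $\tilde Y\to\bar W$ from the simply connected orbifold $\tilde Y$, which lifts to a locally isometric map $\tilde Y\to Z$; between complete simply connected orbifolds of the same dimension $k-1$, this is a global isometry. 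Therefore $\tilde Y\cong Z$ is a sphere, and in particular $Y$ is good.

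The principal obstacle is the non-polarity step: one has to spell out carefully how a polar section for $Y$ combines with the polar transverse slice data at $\bar W$ to produce a polar section for $G$ on $\bS^n$, and then invoke real-analyticity of the action to spread the constant curvature one condition globally. The identification $\tilde Y\cong Z$ is then a clean consequence of the simple connectedness of $Z$ provided by \tref{good} and \lref{fgt-lemma}, together with standard orbifold covering theory applied to $I_H$.
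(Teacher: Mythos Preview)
Your overall structure matches the paper's: pick a codimension-one stratum, apply the strata reduction to get $Y=\bS^m/N$, lift $I_H$ to the universal orbi-covers, and identify $\tilde Y$ with the fixed hypersurface of a reflection using \tref{good} and \lref{fgt-lemma}. The paper does the identification step slightly differently: it lifts $I_H$ directly to a totally geodesic immersion $\tilde I_H:\tilde Y\to\tilde X$ (citing the general lifting theory in \cite[p.~611]{BrH}), observes that $\tilde Y$ is a manifold because $\tilde X$ is, and then argues that the image of $\tilde I_H$ must coincide with the fixed set $Z'$ of the reflection by dimension and total geodesy. This avoids having to treat the closed face $\bar W\subset X$ as an orbifold in its own right and speak of its universal orbi-cover, which is a little delicate since $I_H$ is only an immersion and $\bar W$ is just a subset of $X$.

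The substantive gap is your non-polarity argument. You propose to build a polar section for $G$ near $\bar W$ out of a section for $N$ on $F$ together with the transverse slice data, and then propagate by real-analyticity. You yourself flag this as ``the principal obstacle,'' and indeed it is not clear how to carry it out: one would need to show that a section $\Sigma\subset F$ for $N$ extends to a section for $G$ in a tubular neighborhood of the $G$-saturation of $F$, and then argue globality. The paper bypasses this entirely with a rigidity argument. It first establishes the identification $\tilde Y\cong Z'\subset\tilde X$; then, assuming $N$ is polar, $Y$ and hence $Z'$ have constant curvature~$1$. Now $Z'$ is a totally geodesic hypersurface separating the sphere $\tilde X$, which has curvature $\geq 1$, into two pieces with totally geodesic boundary $Z'$. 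A theorem of Hang and Wang~\cite[Theorem~2]{hangwang} (cf.~\cite[(3.3.5)]{Petruninsemi}) forces $\tilde X$ itself to have constant curvature~$1$, whence $G$ is polar, a contradiction. This global comparison-geometric argument is what you are missing; your local section-assembly sketch would need considerable work to be made rigorous, if it can be at all.
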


\begin{proof}
Recall that $X$ is a good Riemannian orbifold and $\tilde X$ is 
diffeomorphic to a sphere by Theorem~\ref{good}.

Choose a point $p\in \mathbb S^n$  which is projected to a codimension one 
stratum $St$ in $X$.
Let $H$ be the isotropy group of $p$. Then the set of fixed points $F$ of 
$H$ is 
a great subsphere of $\mathbb S^n$ of some dimension $m$. The results 
above show that $Y=\mathbb S^m /N$ is Riemannian orbifold and that $I_H:Y\to X$ is an isometric (totally geodesic) immersion whose image   contains $St$.  In particular,
$Y$ has dimension $k-1$.

The map $I_H$ lifts to an isometric, totally geodesic immersion $\tilde  I_H :Z\to 
\tilde X$, between the universal orbi-coverings 
of $Y$ and $X$, respectively~\cite[p.~611]{BrH}.  Since $\tilde X$ is a manifold, we deduce that $Z$ is a manifold. In particular, $Y$ is a good Riemannian orbifold.

Let $x\in \tilde X$ be any point projected to $St$ in $x$.  Then there is a isometric reflection $r:\tilde X\to \tilde X$  (belonging to~
 $\pi _1 ^{orb} (X)$), whose set  $Z'$ of fixed points is a $(k-1)$-dimensional totally geodesic submanifold containing  $x$, whose projection to $X$ contains $St$.    We see by dimensional reasons and the fact that $\tilde I_H$ is totally geodesic, that the image $\tilde I_H(Z)$ coincides with $Z'$.  The map
$\tilde I_H:Z\to Z'$ is a local isometry, hence a covering map. By \lref{fgt-lemma}, $Z'$ is diffeomorphic to a sphere. Since $k\geq 3$, the sphere $Z'$ is simply connected and
$\tilde I_H$ is an isometry.

It only remains to prove that the action of $N$ on $\mathbb S^m$ is non-polar.
Otherwise, $Y$ and hence $Z'$  had constant curvature $1$. The submanifold $Z'$ divides the manifold $\tilde X$ into two submanifolds with totally geodesic boundary $Z'$.
Since the curvature of $\tilde X$ is at least $1$, we can apply a special case of a theorem by Hang and Wang~\cite[Theorem~2]{hangwang} to see that $\tilde X$ has constant curvature 1 (cf.~also~\cite[(3.3.5)]{Petruninsemi} for a closely related result). But then the action of $G$ on $\mathbb S^n$ is polar.  This contradicts our assumptions.
\end{proof}

\section{Special reducible case}\label{special-red}
In this section we are going to analyze sums of representations of cohomogeneity one.
Herein it is convenient to adopt a different
notation for representations (compare~\cite{str}).
We are going to prove:

\begin{prop} \label{red}
Let $\rho:G\to \mathbf O(V)$ be a representation of a 
compact connected Lie group~$G$. Assume $V=V_1\oplus V_2$ is a 
$G$-irreducible decomposition such that the restrictions
$\rho_i$ to $V_i$ have cohomogeneity $1$.  If $\rho $ is infinitesimally polar but 
not polar then $\rho$ is listed in the table below  (orbit-equivalent representations
are designated by the same number and different letters).  In particular, the cohomogeneity of $\rho$ in $V$  is at most $6$.


\[ \begin{array}{|c|c|c|c|c|}
\hline
\textsl{Case} & G & \rho & \textsl{Cohom} & \textsl{Conditions} \\
\hline
1a& \SO n & \rho_n\oplus\rho_n & 3  & n\geq2\\
1b&\G & \R^7\oplus\R^7 & 3 & - \\
1c&\Spin7 & \Delta_7\oplus\Delta_7 & 3 & - \\
2&\Spin9 & \Delta_9\oplus\Delta_9 & 4 & - \\
3&\SU 2 & \mu_2\oplus\mu_2 & 5  & - \\
4a&\SU n & \mu_n\oplus\mu_n & 4  & n\geq3\\
4b&\U n & \mu_1\otimes\mu_n\oplus\mu_1\otimes\mu_n & 4  & n\geq2\\
5a&\U1\times \SU n\times\U1 & \mu_1\otimes\mu_n\oplus\mu_n\otimes\mu_1 & 3 & n\geq2 \\
5b&\U1\times \SU n & (\mu_1)^r\otimes\mu_n\oplus(\mu_1)^s\otimes\mu_n & 3 & n\geq3,\ r\neq s \\
6&\SP n & \nu_n\oplus\nu_n & 6  & n\geq2\\
7&\SP n\times\U1 & \nu_n\otimes\mu_1\oplus\nu_n\otimes\mu_1 & 5  & n\geq2\\
8&\U1\times \SP n\times\U1 & \mu_1\otimes\nu_n \oplus \nu_n\otimes\mu_1 & 4 & n\geq2 \\
9a&\SP1\times\SP n\times\SP1 & \nu_1\otimes\nu_n\oplus\nu_n\otimes\nu_1& 3  & n\geq2\\
9b&\U1\times\SP n\times\SP1 & \mu_1\otimes\nu_n\oplus\nu_n\otimes\nu_1 & 3  & n\geq2\\
9c&\SP n\times\SP1 & \nu_n\otimes\nu_1\oplus\nu_n & 3  & n\geq2\\
10&\SP n\times\SP1 & \nu_n\otimes\nu_1\oplus\nu_n\otimes\nu_1 & 4  & n\geq2\\

\hline
\end{array} \]
\begin{center}
\textsc{Table 2: Good orbifold quotients}
\end{center}

\[ \begin{array}{|c|c|c|c|c|}
\hline
\textsl{Case} & G & \rho & \textsl{Cohom} & \textsl{Conditions}\\
\hline
11&\U1 & (\mu_1)^r \oplus (\mu_1)^s& 3 & \mbox{$r\neq s$, $rs\neq0$} \\
12&\SU2\times\U1 & \mu_2\otimes\mu_1 \oplus \rho_3 & 3 & - \\
13a&\SP2\times\SP1 & \nu_2\otimes\nu_1\oplus \rho_5 & 3 & -\\
13b&\SP2\times\U1 & \nu_2\otimes\mu_1\oplus \rho_5 & 3 & -\\
13c&\SP2 & \nu_2\oplus \rho_5& 3 & -\\
14&\Spin9 & \Delta_9 \oplus \rho_9 & 3 & -\\
\hline
\end{array} \]
\begin{center}
\textsc{Table 3: Bad orbifold quotients}
\end{center}
\end{prop}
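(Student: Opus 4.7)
\smallskip

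The plan is to reduce to a finite case analysis based on the classification of cohomogeneity-one irreducible representations of compact connected Lie groups. Recall that by the classical work of Borel and Montgomery--Samelson, the irreducible representations $\rho_i : G \to \mathbf{O}(V_i)$ of connected compact $G$ with cohomogeneity one are exhausted by the transitive actions on spheres ($\SO n$ on $\R^n$, $\SU n$ on $\C^n$, $\SP n$ on $\Q^n$, $\SP n \cdot \U1$ on $\C^{2n}$, $\SP n \cdot \SP 1$ on $\Q^n$, $\G$ on $\R^7$, $\Spin 7$ on $\R^8$, $\Spin 9$ on $\R^{16}$) together with the possibility of tensoring with a non-trivial central circle character. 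I would start by listing these, recording for each the principal isotropy subgroup $H_i \subset G$ and its action on the ambient space.

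The main criterion is the following: since each $\rho_i$ has cohomogeneity one, a regular point $v_2 \in V_2$ has isotropy $G_{v_2} = H_2$ (principal isotropy of $\rho_2$), and the slice representation at $v_2$ is $(H_2, V_1 \oplus \R)$ with $\R$ a trivial line. By \lref{slice} and the infinitesimal polarity hypothesis, this slice must be polar, so the restriction $\rho_1|_{H_2}$ must be polar; by symmetry $\rho_2|_{H_1}$ must also be polar. Second, at a generic point $v=v_1+v_2$ with $v_i\neq 0$ both regular, the full principal slice representation must be polar, which with the previous step will usually be automatic but must be verified. Third, one must check slice representations at the residual non-principal orbit types (in particular at vectors of the form $v_1\in V_1$ with $v_2=0$ but $v_1$ not a normal vector of a cohomogeneity-one reduction of $\rho_1$), since infinitesimal polarity requires polarity of all slice representations.

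Having fixed $G$, I would then pair up, for each admissible $G$, the compatible $(V_1,V_2)$. In the cases where $G$ is simple, there is essentially one choice of $V_i$ and the only parameter is whether $V_1\cong V_2$ or not (which for simple $G$ without outer freedom means $V_1=V_2$ as $G$-modules); this produces cases 1--4, 6, 10. When $G$ carries an extra central circle factor (case of $\U n$, $\SP n \cdot \U 1$, or $\SP n \cdot \SP 1$), one has the freedom to weight the circle character independently on $V_1$ and $V_2$, yielding the multi-parameter cases 4b, 5, 7, 8, 9, as well as the intrinsic scalar parameter $(r,s)$ in case 5b and 11. For every candidate pair, I would now compute $\rho_1|_{H_2}$ explicitly and use Dadok's classification to verify polarity, then discard any candidate for which $\rho$ itself is already polar (this removes the isotropy representations of symmetric spaces, such as $V_1\oplus V_2$ being the standard $+$ standard on $\R^n$, which occurs as a symmetric pair). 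What remains should be exactly the list in Tables 2 and 3.

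The hardest step will be sorting the surviving list into good and bad orbifold quotients. For this I would exploit \lref{goodness} together with \lref{pet}: one needs to compute the orbit space $X=\bS^n/G$, identify its mirrors, check conditions (C1) and (C2) on intersections of mirrors, and verify the isotropy order condition along codimension-two strata. This is most delicate in cases like 11--14 where the quotient sphere is low-dimensional and the combinatorics of singular strata is easy to mis-count. In particular, for cases 12--14 I expect a bad orbifold because the quotient is forced to be half of a tear-drop-type disc (compare \exref{example}); the reduction structure from Section \ref{reduction} and the explicit description of principal isotropy reductions should identify these with known bad quotients. Finally, I would read off the cohomogeneity directly from $\dim V - \dim G + \dim H_{\text{princ}}$, checking that it never exceeds 6, which is the claimed upper bound.
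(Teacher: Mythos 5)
Your strategy matches the paper's: both arguments rest on the classification of cohomogeneity-one representations of compact connected groups and then constrain the admissible pairings $\rho_1\oplus\rho_2$ by demanding polarity of the slice representations at non-zero points of $V_1$ and $V_2$ (your ``$\rho_1|_{H_2}$ polar'' test is exactly the slice computation the paper performs at $p\in V_1$ or $p\in V_2$), discarding the candidates that are already polar. The paper organizes the case analysis differently — by which group acts effectively on one summand and by whether $\rho$ is a doubling representation — and it leaves the good/bad orbifold labeling of the tables to Section~3 rather than the proof of the proposition, but the mathematical content and the key reduction are the same as what you propose.
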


\Pf   The representation $\rho =\rho_1 \oplus \rho _2$, with $\rho_i$ being of cohomogeneity $1$, is polar if and only if it has cohomogeneity $2$.  This happens if and only if $\rho$ is orbit equivalent to the outer direct sum of the representations $\rho_1$ and 
$\rho _2$.  In particular, it is the case if $G$ splits as $G=G_1 \cdot G_2$,
such that the restriction of $\rho_i$ to $G_i$ acts on $V_i$ with cohomogeneity $1$, for~$i=1$,~$2$.

The list of representations of cohomogeneity one
is well known (see e.g.~\cite[\S12.7]{GL}). Cases~1-4, 6, 7 and~10
correspond to a doubling representation. The circle group 
yields case~11 (and~1a). 

Assume now that $G$ is not a circle and $\rho$ not a doubling representation.
If $G$ is simple, then $\rho_1$ and $\rho_2$ must be  two inequivalent representations of 
cohomogeneity one.   We get  cases 13c and 14; the remaining cases are
 representations
of $\SU4$, $\Spin7$ and $\Spin8$ 
of cohomogeneity~$2$ (cf. \cite[p.84]{str2})
and one additional representation $(\SU2,\mu_2 \oplus \rho_3)$.
The latter representation has a non-polar slice
at $p\in\R^3$ given by $(\U1,\mu_1\oplus\mu_1\oplus\theta)$, where $\theta$ denotes a 
trivial summand.

 
Otherwise two factors of $G$ act effectively on one summand, which we assume to be $V_1$. The first case is $(G/\ker\rho_1,V_1)=(\U n,\mu_n)$, $n\geq2$.  Since $\SU n$ acts with cohomogeneity $1$ on $V_1$,  the 
product of the complementary  factors  of $\SU n$ in $G$ does not act with cohomogeneity $1$ on $V_2$ (due to the observation in the beginning of the proof).
We get cases 5a and 12, the representation   $(\U4,\mu_4\oplus\rho_6)$,
which is polar,  and the following candidates: 

\[ \begin{array}{|c|c|c|c|}
\hline
G & \rho & \textsl{Cohom} & \textsl{Conditions}\\
\hline
\U1\times\SU n & (\mu_1)^r\otimes\mu_n \oplus (\mu_1)^s\otimes\mu_n& 4 & \mbox{$n=2$ or $r=s$} \\
\U1\times\SU n & (\mu_1)^r\otimes\mu_n \oplus (\mu_1)^s\otimes\mu_n& 3 & \mbox{$n\geq3$ and $r\neq s$} \\
\hline
\end{array} \]
Since $r=s$ is a doubling representation, for the first representation 
we may assume $n=2$; taking a slice representation at a point $p\in V_1$   shows that $\rho$ can be
infinitesimally polar if and only if $r=\pm s$; as real representations,
the two cases are equivalent one to the other and, after dividing by a finite 
ineffective group, we may assume $r=s=1$.
The second representation yields case~5b.


The second case is $(G/\ker\rho_1,V_1)=(\SP n\times\U1,\nu_n\otimes\mu_1)$,
$n\geq2$.  Again,  the 
product of the complementary  factors  of $\SP n$ in $G$ does not act with cohomogeneity $1$ on $V_2$.
We get cases~8, 9b and~13b plus 
$(\U1 \times\SP n,(\mu_1)^r\otimes\nu_n \oplus (\mu_1)^s\otimes\nu_n)$, 
where $n\geq2$. 
This representation has cohomogeneity~$5$. 
Taking a slice representation shows that it can be
infinitesimally polar if and only if $r=\pm s$; as real representations, 
the two cases are equivalent one to the other.

The last case is $(G/\ker\rho_1,V_1)=(\SP n\times\SP1,\nu_n\otimes\nu_1)$,
$n\geq1$. Again,  the 
product of the complementary  factors  of $\SP n$ in $G$ does not act with cohomogeneity $1$ on $V_2$.
We get $(\SP1\times\SP1,\nu_1\otimes\nu_1\oplus\rho_3)$, which
is polar, plus cases~9a, 9b, 9c and~13a. \EPf

\section{Main step} \label{seclast}
We are going to prove by induction on $k$ the following statement.

\begin{thm} \label{Mainstep}
Let $\rho:G \to \SO {n+1}$ be a non-polar representation of a compact
 Lie group~$G$. 
 Assume that  $X^k=\mathbb S^n /G$ is a good Riemannian orbifold whose universal orbi-covering is diffeomorphic to a sphere.
Then $X^k$ has constant curvature $4$.  Moreover, if $k\geq 3$,  then the restriction of  $\rho$  to the identity component of $G$ 
is the sum   of two representations of cohomogeneity $1$.  Finally, $k\leq 5$.  
%
\end{thm}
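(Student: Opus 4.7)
\emph{Proof plan.} Induct on $k$. The base case $k=2$ is Straume's classification~\cite{str}: every good non-polar two-dimensional spherical quotient has constant curvature~$4$, and the last two assertions are then vacuous.

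For the inductive step, fix $k\geq 3$ and assume the theorem in all lower dimensions. First, $\partial X$ cannot be empty: by \lref{posrank} an empty boundary would force $G^0$ to have rank one and act almost freely on $\bS^n$, so $X$ would be a weighted projective space (\secref{subsecfree}); but these are never diffeomorphic to spheres when $k\geq 3$ unless the representation is a polar Hopf action, which is excluded by hypothesis. Hence \cref{induction} applies and produces a non-polar isometric action of a compact group $N$ on some unit sphere $\bS^m$ whose quotient $Y=\bS^m/N$ is a good Riemannian orbifold of dimension $k-1$ with universal orbi-cover $Z$ diffeomorphic to a sphere, embedded in $\tilde X$ as the fixed-point set of an isometric reflection. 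Applying the induction hypothesis to $Y$ yields that $Z$ has constant curvature~$4$, and if $k-1\geq 3$ also gives $k\leq 6$ and that the restriction of the $N$-representation to $N^0$ splits as a direct sum of two cohomogeneity-one representations.

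The heart of the inductive step is to prove the analogous splitting for $\rho|_{G^0}$; once this is achieved, \pref{red} matches $\rho|_{G^0}$ with a reducible entry of Table~2, and the explicit computations of \secref{secdiscuss}---in particular \secref{cx-case} and \secref{cohom6}---verify that $\tilde X$ has constant curvature~$4$ and $k\leq 5$, reproducing Table~1. Suppose first that $\rho|_{G^0}$ is reducible and write $V=V_1\oplus V_2$ with both $V_i$ non-trivial and $\rho_i:=\rho|_{V_i}$. The sphere $\bS(V_i)$ sits totally geodesically in $\bS^n$, so $\bS(V_i)/G^0$ is a totally geodesic infinitesimally polar sub-orbifold of $X$; using slice arguments together with \lref{slice} at a principal isotropy of $\rho_{3-i}$, the polarity of the slice representation forces each $\rho_i$ to have cohomogeneity~$1$ (otherwise the induction hypothesis applied to $\bS(V_i)/G^0$ and a further strata reduction produces a forbidden orbifold). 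Suppose alternatively that $\rho|_{G^0}$ is irreducible. Combining the minimal-reduction machinery of \secref{reduction}, \lref{conn-irred-reduction} and \lref{slice} with the inductively known structure of $Y$, one restricts the candidates to a short list of irreducible infinitesimally polar representations of cohomogeneity at most~$6$; each is excluded case-by-case by applying \cref{induction} to one of its codimension-one strata and observing that the resulting $(N,Y)$ is incompatible with any entry allowed by the induction hypothesis.

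The main obstacle is the irreducible case. The descent of \cref{induction} gives strong information about the reduced action $(N,\bS^m)$, but transferring it into a strong enough constraint to rule out an irreducible $\rho|_{G^0}$ requires delicate use of how the normalizer of a codimension-one isotropy embeds in~$G^0$, together with precise control of which polar slice representations with trivial principal isotropy can appear. In principle a brute-force enumeration of all irreducible infinitesimally polar representations of cohomogeneity~$\leq 6$ would suffice, but the geometric inductive route above is considerably shorter.
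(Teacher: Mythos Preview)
Your overall architecture matches the paper's: induction on $k$, Straume for $k=2$, non-empty boundary via \lref{posrank}, then \cref{induction} to descend to $Y$ of dimension $k-1$, and a reducible/irreducible dichotomy for $\rho|_{G^0}$. The irreducible branch is sketchy but points at the right ingredients (minimal reduction, \lref{conn-irred-reduction}, \lref{slice}, and the inductive structure of $Y$); the paper fills this in with explicit rank comparisons between $G$, $N$ and $G_p$ and a short case analysis, which your outline does not contradict.

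The substantive gap is in the reducible case. Your proposed mechanism---look at the slice at a $\rho_{3-i}$-regular point, invoke \lref{slice}, and claim that polarity of the slice forces $\rho_i$ to have cohomogeneity~$1$, or else ``induction applied to $\bS(V_i)/G^0$ produces a forbidden orbifold''---does not go through as stated. Polarity of the slice at $v\in V_{3-i}$ only says that the $G_v$-action on $V_i\oplus(\text{normal slice in }V_{3-i})$ is polar; it does not by itself bound the cohomogeneity of $G$ on $V_i$. And there is no reason given why $\bS(V_i)/G^0$ should be a \emph{good} orbifold with spherical universal cover, so the induction hypothesis is not available for it. The paper handles the reducible case by a completely different, global-geometric argument on $\tilde X$: using the isometry induced by $-\mathrm{Id}\in\mathbf O(n+1)$ together with the fact (from induction) that $Z\subset\tilde X$ has constant curvature~$4$, one shows that all geodesics in $\tilde X$ are closed of period~$\pi$; then a Toponogov-type convexity argument forces each ``antipodal set'' $\mathrm{Ant}(p)=\{q:d(p,q)\geq\pi/2\}$ to be a single point. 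Since the images of $\bS(V_i)$ and $\bS(V_i^\perp)$ in $X$ are at mutual distance~$\pi/2$, this one-point property forces $G$ to act transitively on each invariant great subsphere, i.e.\ each $\rho_i$ has cohomogeneity~$1$. This closed-geodesic/antipodal-set step is the crux of the reducible case and is absent from your proposal.

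A minor correction in your boundary argument: the $\SP1$ Hopf action on $\bS^7$ is \emph{non-polar} (the quotient is $\mathbb H P^1=\bS^4(\tfrac12)$, curvature~$4$), so you cannot exclude it by saying ``polar Hopf action is excluded by hypothesis.'' The paper simply observes that for this one almost-free case all three conclusions already hold, and then assumes $\partial X\neq\emptyset$ thereafter.
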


\begin{proof}

We proceed by induction on $k$. The case $k=1$ cannot occur, 
since such an action would be polar. The case $k=2$ is well known
(Subsection \ref{subsecstraume}).  Assume now that $k\geq 3$, 
and that the result is true for all $k'<k$.   
By considering the restriction of the action to the identity component of $G$, 
it suffices to prove the result for connected groups.

Either we have the Hopf action of
$\SP1 $ on $\mathbb S^7$, 
for which case all claims are true, or
the quotient has boundary (\lref{posrank} and Subsection \ref{subsecfree}). 
Thus we 
may assume $\partial X\neq \emptyset$.   Due to \cref{induction},  we find an isometric non-polar action of some compact Lie group $N$ on some unit sphere $\mathbb S^m$, such that the 
quotient $Y=\mathbb S^m /N$ is a good Riemannian orbifold of dimension $k-1$, whose universal orbi-covering is diffeomorphic to a sphere.   
By our inductive assumption,
we deduce $k-1 \leq 5$.  Hence $k\leq 6$.  By the inductive assumption, $Y$ has constant curvature $4$, hence its universal orbi-covering  $Z$ is  a sphere of constant curvature $4$.

We first assume that~$\rho$  is reducible. 
Due to \pref{red} and Section \ref{secdiscuss} we only need to prove that $\R^{n+1}$ is the sum of 
two summands on each of which $G$ acts with cohomogeneity one, since this would imply that the quotient has curvature $4$  and dimension $\leq 5$.  Thus it suffices to 
prove  that $G$  acts transitively on any non-trivial invariant great subsphere of $\mathbb S^n$.

Let $\tilde X$ denote the universal orbi-covering of $X$. Then  $\tilde X$ is a sphere 
with curvature $\geq 1$. By \cref{induction},  it admits a reflection $r:\tilde X\to \tilde X$, whose set of fixed points $Z$ is a $(k-1)$-dimensional sphere of constant curvature $4$.  

We claim that 
all geodesics in $\tilde X$ are closed of period $\pi$ 
(cf.~\cite[Section~2.1]{wilking-parity}).
Indeed, the element $-Id \in O (n+1)$ commutes with $G$ and induces an isometry $I$ on 
$X=\mathbb S^n/G$, which sends a point $p$ to the point $\exp (\pi h)$, 
where
$\exp$ is the exponential map of the orbifold $X$ and $h$ is any unit 
vector at $p$.  Hence, the map  $\tilde I:\tilde X\to \tilde X$ which sends a point
$p$ to the endpoint of any geodesic of length $\pi$ starting at  $p$ is  well-defined, and 
it is an isometry of $\tilde X$.   Note that $\tilde I ^2$ is the identity, and  that $\tilde I$ sends any geodesic to itself  without changing the orientation of the geodesic. 
All geodesics in $Z$ are closed of period $\pi$, thus the map $\tilde I$ fixes all points in $Z$.   Hence $\tilde I$ is the identity or the  reflection $r$.
But $r$ changes the orientation  on any geodesic intersecting $Z$ orthogonally.  Hence $\tilde I$ is the identity. Thus all geodesics
in $\tilde X$ are closed of period $\pi$.

In particular, the diameter of $\tilde X$ is at most $\frac {\pi} 2$.  For any point $p\in \tilde X$,
we denote by  $Ant (p)$ the set
of all points $q\in \tilde X$ with $d(p,q) \geq \frac {\pi} 2$.
We claim that  for any point $p \in \tilde X$,  the set  $Ant (p)$ has at most one point.

To prove the claim, note first that
the set  $Ant (p)$ contains with any pair of points $q_1,q_2$ any 
(non necessarily  minimizing) geodesic of length $<\pi$ between these points
by Toponogov's theorem.
Since geodesics  in $\tilde X$ are closed of period $\pi$, we see that $Ant (p)$ contains
any closed geodesic  going through any pair of its different points, hence  $Ant (p)$  is a totally geodesic submanifold of $\tilde X$.  Therefore, if $Ant (p)$ intersects $Z$ in at least $2$ points, it must contain $Z$.  Hence $p\not\in Z$ and  the distance
of $p$ to $r(p)$ is larger than $\frac \pi 2$, since $Z$ separates $\tilde X$. This contradicts the diameter bound.

Thus $Z$ intersects $Ant (p)$ in at  most one point.    Assume that $Ant (p)$ contains at least
two points, hence at least one closed geodesic $\gamma$. 
Now, the distance function to the subset  $Z$ restricted to any of the two open subspaces $\tilde X ^{\pm}$  in which $Z$ divides $\tilde X$ is strictly concave. Hence the closed 
geodesic $\gamma$ cannot be contained in any of this half-spaces of $\tilde X$. Thus,
$\gamma$ intersects $Z$ in some point $q$.
 But   by topological reasons, $\gamma$ cannot intersect $Z$ only at one point $q$.
This contradiction proves the claim  that  $Ant (p)$ consists of at most one point.

Let now  $S= \mathbb S ^l, 0\leq l <n$ be a   
$G$-invariant  great sphere in $\mathbb S^n$ and let $S^{\perp}$ be the orthogonal complement of $S$.   Let $\bar S$ and $\bar {S^{\perp}}$ be the projections of these great spheres to $X$. Then for any point $p\in \bar S$ and any point $q\in \bar {S^{\perp}}$, we have $d(p,q) = \frac \pi 2$.     Since the orbi-covering $\tilde X\to X$ is $1$-Lipschitz,  we deduce that the sets $\bar S$ and $\bar {S^{\perp}}$  have only one element each (otherwise we would get in $\tilde X$ subset of the form $Ant (p)$ or $Ant (q)$ with 
more than one point).  But this is exactly the statement that $G$ acts transitively on $S$ and on 
$S^{\perp}$.

It remains only to prove that~$\rho$ cannot be irreducible.  
Suppose, to the contrary, $\rho$ is 
an irreducible non-polar representation of cohomogeneity 
$4\leq k+1 \leq 7$ on $V=\R ^{n+1}$.
In view of Lemma~\ref{conn-irred-reduction} and the subsequent lines, 
we may replace 
this action by the identity component of 
its minimal reduction and assume that it has trivial principal 
isotropy groups. Due to Lemmas~\ref{iso-conn} and ~\ref{slice}, 
all isotropy groups are connected and products of groups of rank one.
In the cases $k=3$, $4$, we refer 
to~Tables~1 and~2 in~\cite{GL}, in which all
irreducible representations of cohomogeneity $4$ and $5$ are classified, 
and check that each representation 
in the tables is not infinitesimally polar. Indeed, we need
only check representations with \emph{trivial copolarity}  (i.e., those equal to their minimal reductions) and 
non-empty boundary which leaves us with the representation of 
$\SO3\times\U2$ only, which is not 
infinitesimally polar by~\cite[\S8]{GL}.   

Now we may assume $k=5$ or $6$.
In the case $G$ is simple, we refer to the table of non-polar representations
of cohomogeneity~$6$ or $7$
in~\cite[\S12.8]{GL}. We exclude the representations of complex
type by noting that they have toric reductions and thus are not  
infinitesimally polar (cf.~Subsection~\ref{reduction}). 
We exclude $(\SU2,\C^4)$ by noting that 
its orbit space has empty boundary (it produces the smallest weighted quaternionic projective space, namely, the Hitchin orbifold $O_3$), and the remaining 
representations of quaternionic type by noting that they have an isotropy group 
(corresponding to a highest weight vector)
containing a simple factor of rank bigger than one. 
Finally, the 
remaining representation of $\SO3$ is also excluded
because the slice representation at a $0$-weight vector is non-polar. 
Thus we may assume in addition that $G$ is non-simple. 

To finish the proof, let $p\in \bS^n$ be such that $\mathrm{rank}(G_p)\geq
\mathrm{rank}(G)-1$ (cf.~\cite[Lemma~6.1]{wilking-acta}). 
The triviality of principal isotropy groups implies that 
slice representations are effective. 
Due to Lemmas~\ref{slice}
and~\ref{iso-conn},  $G_p$ is a product of groups of rank one and there exists a normal
simple factor $H$ of $G_p$, $H\cong\U1$ or $H\cong\SP1$, corresponding
to a boundary stratum $St$ of $X$. 
Moreover, we can choose $H\cong \SP 1$, unless $G_p$ is a torus. According to Corollary~\ref{induction} 
and its proof, 
there exists a non-polar isometric action of a
compact connected Lie group $(N/H)^0$ on some unit sphere $\bS^m$ such that 
the quotient $Y$ is a good Riemannian orbifold of dimension~$k-1$ and 
its universal orbi-covering is diffeomorphic to a sphere, where
$N$ is the normalizer of $H$ in $G$. By induction, we know that
the action of $(N/H)^0$ on~$W=\R^{m+1}$ is reducible and thus listed
in Table~2 or~3 in~Section~\ref{special-red}.
Using the fact that~$k=5$ or~$6$ we deduce 
that $(N/H)^0$ is $\SU2$ ($k=5$) or $\SP q$ ($k=6$) or 
$\SP q\times\U1$ ($k=5$) with their respective doubling representations.  
Since  the isotropy groups of the action of $(N/H)^0$ on $\bS^m$ 
are still products of groups of rank $1$, in the latter 
two cases we get $q=2$.  

By the choice of~$H$, the group
$G_p$ is contained in $N$. Then $G_p/H$ is the isotropy group 
of $(N/H,W)$ at~$p\in W$. By the explicit form of this
representation, we have $\mathrm{rank}(G_p/H)<\mathrm{rank}(N/H)$
so $\mathrm{rank}(G_p)\leq\mathrm{rank}(N)-1$.
Therefore $\mathrm{rank}(G)=\mathrm{rank}(N)$. Note also that $N\neq G$. 
Next we examine each case separately. 

If  $(N/H)^0=\SU2$ then $\mathrm{rank}(G)=2$.
The representation $(\SU2,\C^2\oplus\C^2)$ has all
isotropy groups trivial, so $G_p=H$.  
Since $G$ is not simple and not equal to~$N$, we deduce $H=\U1$ and 
$G$ is locally isomorphic to~$\SU2\times\SU2$. 
Since the cohomogeneity of $\rho$ is $6$,
we have $\dim V=12$. We are lead to a contradiction by~\cite[\S10.5]{GL}
which says that this representation has empty boundary.

In both the remaining cases,  we see that the  action $(N/H) ^0$ on $W$ 
has a non-Abelian isotropy group.  More precisely, there is a point 
$q\in W$, such that the isotropy group $K=((N/H)^0)_q$ has a non-Abelian
connected component $K^0$ with  
$\mathrm{rank} (K^0) = \mathrm{rank} (N/H) ^0 -1$.
Thus the isotropy group $G_q$ is non-Abelian as well and satisfies 
$\mathrm{rank} (G_q)\geq \mathrm{rank}(N_q)\geq \mathrm{rank} (N) -1 =\mathrm{rank} (G)-1$.   
Thus we may replace $p$ by $q$ 
and, applying \lref{slice}, assume that 
$H$ is isomorphic to  $\SP 1$.


If $(N/H)^0=\SP2$ then $\mathrm{rank}(G)=3$.  
The group $G$ is non-simple, and contains a subgroup locally 
isomorphic to $H\times \SP 2 \cong \SP 1 \times \SP 2$.
We deduce that $G$ is finitely covered 
by  $\tilde G= \SP 1 \times \SP 2$
and $H\cong\SP 1$ is a normal subgroup of $G$, 
 which contradicts the fact that $N\neq G$.


If $(N/H)^0=\SP2\times\U1$ then $\mathrm{rank}(G)=4$.
We see that $G$ contains a subgroup locally isomorphic to 
$H\times \SP2\times\U1$, with 
$H\cong \SP 1$. Moreover, $H$ is not a normal subgroup 
of $G$, since $G\neq N$.

If $G$ has no simple factor of rank~$3$, then it is locally
isomorphic to $\SP2\times G'$, where $G$ and $N$ have a common
$\SP2$-factor; since $W=\Q^2\oplus\Q^2$ is a subspace 
of $V$ invariant under this factor, we deduce that
$V=\Q^2\otimes_{\mathbb F} V'$
where $V'$ is a representation of $G'$ and
$\mathbb F=\R$, $\C$ or $\Q$ according to whether
$V'$ is of real, complex or quaternionic type. 
In any case, 
$\dim_{\mathbb R}V$ is divisible by~$8$. It follows that 
$\dim G'=\dim V'-\dim\SP2-6$ is also divisible by~$8$.
Since $G'$ has rank~$2$, we deduce that $G'=\SU3$. 
Now $\dim V=\dim\SP2\times\SU3+6=24$, so 
$(G,V)=(\SP2\times\SU3,\C^4\otimes_{\mathbb C}\C^3)$  
and $H$ must be an $\SU 2$-subgroup of $\SU3$.  
The fixed point set $Z$ of any such subgroup on $\C^3$ has 
complex dimension  $1$, so the set of fixed points
of $H$ in $\C^4 \otimes_{\mathbb C} \C^3$ is $\C^4\otimes_{\mathbb C}Z$. 
Hence its complex dimension is not~$8$, a contradiction.

Suppose next $G$ has a simple factor $G'$ of rank $3$. 
Then $G'$ contains $\SP2$ and an $\SP1$-subgroup  normalizing it. 
It follows that  $G'=\SP3$. 


Now $G=\SP3\times\SP1$ and $\dim V=\dim G+6=30$
or $G=\SP3\times\U1$ and $\dim V=\dim G+6=28$.
Since $30$ is not divisible by $4$, in the first case
$V$ is a real tensor product of irreducible representations 
of real type of $\SP3$ and $\SP1$. 
The irreducible representations of $\SP3$ of lowest degree 
(i.e.~complex dimension) are the fundamental representations
(cf.~\cite[Lemma~3.1]{On})
with degrees $6$, $14$ and $14$, and are
respectively of quaternionic, real, quaternionic type. Since $30$ is not divisible by $14$, 
$G=\SP3\times\SP1$ is ruled out. 
On the other hand, in the second case we get two candidates
$(\SP3\times\U1,\Lambda^2\C^6\ominus\C)$ and
$(\SP3\times\U1,\Lambda^3\C^6\ominus\C^6)$. 
The former representation is excluded because the isotropy group
at a point corresponding to (the projection along the trivial 
summand $\C$ of) a skew-symmetric bilinear form 
of rank~$2$ contains a rank $2$ subgroup isomorphic to $\SP2$,
and the latter one is excluded because the isotropy group 
at a highest weight vector contains a rank~$2$ subgroup 
isomorphic to $\SU3$; both situations contradict Lemma~\ref{slice}.
\end{proof}

\section{Conclusion} \label{secverylast}

Now it is easy to deduce all the results stated in the introduction.  
Note that Theorem~\ref{good} has already been proved in Section~\ref{secorbifold}. 

Under the assumptions of \tref{thirdthm}, we see that the quotient $X$
has non-empty boundary (\lref{posrank} and Subsection \ref{subsecfree}). By \tref{good}, $X$  is a good orbifold, whose universal orbi-covering is diffeomorphic to the sphere.  Therefore, we may apply \tref{Mainstep} to see that $\rho$ is the sum of two representations of cohomogeneity one.   Now, \tref{thirdthm} follows from 
\pref{red} and the description of the quotients in Section \ref{secdiscuss}.

Under the assumption of \tref{thmsum} we  may assume that $G=G^0$.   In all cases listed in the theorem, the quotient $X=\mathbb S^n /G$ is a Riemannian orbifold whose geometry is discussed in Section \ref{secdiscuss}.  On the other hand,  assume that $X$  is a Riemannian orbifold of dimension at least $3$ (so that
$\rho$ has cohomogeneity at least $4$) and that $\rho$ is not polar.
If the quotient $X$ has no boundary then $\rho$  is almost free  
(\lref{posrank}) hence  as stated (Subsection \ref{subsecfree}). 
If $X$ has boundary  then  $\rho$ satisfies the assumption 
of \tref{Mainstep} (due to \tref{good}).  Hence $\rho$ is  one of 
the representations appearing  in \pref{red};
in the only case in Table~2 of rank one, the quotient has no boundary, 
so this case cannot occur. It follows that $G$ has rank
at least $2$ and $\rho$ is as described in \tref{thirdthm}.
This proves \tref{thmsum}.

 \tref{secondthm} is a direct consequence of \tref{thmsum}. Finally, to prove
\tref{firstthm} we may again assume that $G$ is connected, since $\mathbb S^n /G$ and
$\mathbb S^n /G^0$ have the same universal
orbi-covering. Thus we may apply \tref{thmsum}.
If $\rho$ is polar then $X$ has constant curvature $1$ and the universal orbi-covering $\tilde X$ of $X$ is the sphere of constant curvature $1$.  
If  $G$ has rank one and $X$ has empty boundary
then $X$ is a weighted projective space as discussed in Subsection \ref{subsecfree}.  Moreover, $X$ is simply connected as an orbifold in this case, 
hence $\tilde X=X$.  If $X$ has dimension $2$ and curvature $\neq 1$, then as proved by Straume~\cite{str}, the quotient 
$X$ can be represented as the quotient $X= \mathbb S^3 /G'$, for some group $G'$ of 
dimension $1$.  Thus $X$ has as an orbi-covering $X'=\mathbb S^3 /\U 1$,  where $\U 1$
acts without fixed points. In such a case $X'$ is  a weighted complex projective space, which is then the universal orbi-covering of $X$.
Finally, if $X$ has constant curvature $4$ as in Theorem~\ref{thirdthm}, then
$\tilde X$ is a sphere of constant curvature $4$.

\providecommand{\bysame}{\leavevmode\hbox to3em{\hrulefill}\thinspace}
\providecommand{\MR}{\relax\ifhmode\unskip\space\fi MR }
\providecommand{\MRhref}[2]{%
  \href{http://www.ams.org/mathscinet-getitem?mr=#1}{#2}
}
\providecommand{\href}[2]{#2}


\end{document}